\def\A{\mathcal{A}}
\def\C{\mathbb{C}}
\def\H{\mathbb{H}}
\def\M{\mathcal{M}}
\def\MC{\mathcal{M}_{\rm C}}
\def\R{\mathbb{R}}
\def\Z{\mathbb{Z}}
\def\Sc{\operatorname{Sc}}
\def\coh{\operatorname{coh}}
\def\chcos{\sqrt{\cosh\eta-\cos\theta}}
\def\dom{\Omega_{\eta_0}}
\def\re{\operatorname{Re}}
\def\im{\operatorname{Im}}
\def\har{\operatorname{Har}}
\def\partialbar{\overline{\partial}}
\newcommand{\ind}[2]{{}^{\hspace{.1ex}#2}_{#1}}
\DeclareMathOperator{\Span}{Span}
\DeclareMathOperator{\Spanbar}{\overline{Span}}
\newtheorem{theorem}{Theorem}[section]%
\newtheorem{proposition}[theorem]{Proposition}%
\newtheorem{lemma}[theorem]{Lemma}
\newtheorem{definition}[theorem]{Definition}%
\begin{document}

\begin{center}
{\Large Harmonic and monogenic functions on toroidal domains }
\\[5ex]

 {Z.~Ashtab} \\
 Department of Mathematics, CINVESTAV-Quer\'etaro, Libramiento Norponiente \#2000, Fracc.~Real de Juriquilla, Santiago de Quer\'etaro, Qro., C.P.~76230 Mexico \\[1ex]

 {J.~Morais} \\
 Department of Mathematics, ITAM, R\'io Hondo~\#1, Col.~Progreso Tizap\'an, Mexico City, C.P.~01080 Mexico\\[1ex]

 {R.~Michael Porter} \\
 Department of Mathematics, CINVESTAV-Quer\'etaro, Libramiento Norponiente \#2000, Fracc.~Real de Juriquilla, Santiago de Quer\'etaro, Qro., C.P.~76230 Mexico \\[5ex]

   \parbox{.9\textwidth}{\textbf{Abstract.} A standard technique for producing monogenic functions is to apply the adjoint quaternionic Fueter operator to harmonic functions. We will show that this technique does not give a complete system in $L^2$ of a solid torus, where toroidal harmonics appear in a natural way. One reason is that this index-increasing operator fails to produce monogenic functions with zero index. Another reason is that the non-trivial topology of the torus requires taking into account a cohomology coefficient associated with monogenic functions, apparently not previously identified because it vanishes for simply connected domains. In this paper, we build a reverse-Appell basis of harmonic functions on the torus expressed in terms of classical toroidal harmonics. This means that the partial derivative of any element of the basis with respect to the axial variable is a constant multiple of another basis element with subscript increased by one. This special basis is used to construct respective bases in the real $L^2$-Hilbert spaces of reduced quaternion and quaternion-valued monogenic functions on toroidal domains.}

 \end{center}

 \noindent{\bf Keywords:} toroidal harmonics, quaternionic analysis, monogenic function, toroidal monogenics, cohomology on irrotational vector fields. \\[1ex]

\noindent{\bf MSC Classification:} 30G35, 31B05, 42B99\\[3ex]
 
\section*{Introduction}

Quaternionic analysis studies monogenic (also known as regular or hyperholomorphic) functions of a vector variable in a domain in three-dimensional Euclidean space, which are null solutions of generalized Cauchy-Riemann or Dirac-type systems \cite{BrackxDelangheSommen1982,GuerlebeckHabethaSproessig2008,GuerlebeckHabethaSproessig2016,Stein1970,SteinWeiss1971,Sudbery1979}. Since the Cauchy-Riemann and Dirac operators linearize the Laplacian (implying that each real component of a monogenic function is, therefore, Euclidean harmonic), its associated function theory provides the foundations to refine the theory of harmonic functions to three and four dimensions. Quaternionic analysis has become a central research area in mathematics, connecting with boundary value problems, partial differential equations theory, and other fields of physics and engineering \cite{GS1989,GS1997,KS1996,K2003}. As was noticed at an early stage and already implicit in R.\ Fueter's work \cite{Fueter1949}, monogenic functions are, in an essential way, characterized by power series expansions. Applications to solve boundary value problems that originate in mathematical physics often require manipulation of the series by termwise differentiation. For a simple comparison of coefficients, a desired property of the function system is that the derivative of each element is a constant multiple of another single element \cite{Appell1880}, nowadays called the ``Appell property.''

Considerable effort has been directed toward constructing complete orthogonal sets spanning the Hilbert spaces of square-integrable monogenic functions defined in the interior and exterior of the ball \cite{BockGuerlebeck2009,Cac2004,CGBHanoi2005,CacaoGuerlebeckBock2006,Cacao2010,MoraisHabilitation2021,Morais2023} on spheroidal domains  (both prolate and oblate) \cite{Morais2011,Morais2013,MoraisNguyenKou2016,Morais2019,MoraisHabilitation2021,Morais2023,NguyenGuerlebeckMoraisBock2014}, and on finite and infinite cylinders \cite{MoraisLe2011,MoraisKouLe2015}.  Central to the papers \cite{BockGuerlebeck2009} and \cite{Cac2004} is the construction of graded bases of reduced quaternion and quaternion-valued solid spherical monogenics satisfying an Appell-type property with respect to the hypercomplex derivative, while in \cite{MoraisNguyenKou2016} it was shown that in the case of an oblate spheroid, a basis cannot be both orthogonal and Appell. To the best of our knowledge, complete sets of monogenic functions have not previously been built in the context of the torus. In the present paper, we are concerned with developing a quaternionic function theory over toroidal domains and constructing bases of monogenic functions on these domains. The construction of explicit formulas for the monogenic Cauchy kernel and Green's formula on the torus is relatively known and can be found in \cite{Krausshar2001,KraussharRyan2005}.

In this paper, we pursue two lines of investigation which are, as we shall see, remarkably intertwined:

(i) We first construct a ``reverse-Appell basis" of interior toroidal harmonic functions. The classical toroidal harmonics do not satisfy a natural Appell property with respect to any of the variables. In fact, the application of $\partial/\partial x_0$ increases the index (or degree) of the toroidal harmonics rather than decreasing it, and it does not result in a multiple of another toroidal harmonic of the same type. Section \ref{sec:appell} establishes, by an appropriate change of basis, the existence of a collection of toroidal harmonic functions that satisfy what we may call a ``reverse Appell property." Although complicated for hand calculations, the reverse-Appell toroidal harmonic functions are easily handled from computational and theoretical considerations.

(ii) The reverse-Appell basis of toroidal harmonics is an essential element in our construction of the real Hilbert space of square-integrable reduced quaternion-valued monogenic functions on toroidal domains in Section \ref{Section_Basis_toroidal_monogenics1}. A basis of full quaternion-valued monogenic functions in the real Hilbert space $L^2$ over toroidal domains is derived in Section \ref{Section_Basis_toroidal_monogenics2}. Even though these structures are not closed under the quaternionic multiplication, considering a real-linear Hilbert space has its own importance. It turns out that the operators of some boundary value problems (e.g., Lam\'e system \cite{GS1997} and Stokes system \cite{GS21997}) are not $\mathbb{H}$-linear but are nevertheless efficiently treated by employing quaternionic analysis tools. Underlying our manipulations is a cohomology coefficient associated with an arbitrary monogenic function which we calculate for monogenic constants (i.e., functions in $\ker \overline{\partial} \cap \ker \partial$, where $\overline{\partial}$ and $\partial$ denote, respectively, the generalized Cauchy-Riemann operator and its quaternionic conjugate, see Subsection \ref{Section_Reduced-quaternions} below). Representation formulas and the more important general properties of the elements that constitute the bases are discussed thoroughly.

Proofs of some well-known results are omitted to keep the paper reasonably short.


\section{Toroidal harmonics and monogenics}

\subsection{Basic toroidal harmonics}

The domain on which we wish to study harmonic and monogenic functions is the following. Toroidal coordinates (see \cite{Hob1931,MoonSpencer1988})
for  $x=(x_0,x_1,x_2)\in\R^3$ are given by
 \begin{align} \label{eq:toroidalcoords}
  x_0 &= \frac{\sin \theta}{\cosh\eta-\cos\theta}, \ \
  x_1 = \frac{\sinh \eta \, \cos \varphi}{\cosh\eta-\cos\theta}, \ \  
   x_2 = \frac{\sinh \eta \, \sin \varphi}{\cosh\eta-\cos\theta},
 \end{align}
where $(\eta,\theta,\varphi)\in (0,\infty)\times[-\pi,\pi]\times(-\pi,\pi]$.
We denote the interior toroidal domain as
\begin{align*}
\dom = \{x\colon\ \eta>\eta_0\} \cup \{ \eta=\infty \} \subseteq\R^3,
\end{align*}
where
$\{ \eta=\infty \}$ refers to the limiting circle
$\{x\colon\ x_0=0,\ x_1^2+x_2^2=1\}$. Any torus in $\R^3$ can be
shifted and rescaled to a torus of the form $\dom$.

The associated Legendre functions of the second kind of degree $n$ and order $m$, $Q_n^m(t)$ for $t>1$, are 
required for discussing the harmonic functions in $\dom$. The definition of these functions varies slightly in the literature up to a multiplicative constant
\cite{Bateman1944,Hob1931,WhittakerWatson1927}. For definiteness, we take the representation
 \cite[p.\ 195]{Hob1931}  
\begin{align*} 
  Q_n^m(t) = \frac{(-1)^m}{2^{n + 1}} \frac{(n + m)!}{n!} \, (t^2-1)^{m/2}
  \int_{-1}^1   \frac{(1 - s^2)^n}{(t - s)^{n+m+1}} \,ds
\end{align*}
valid for all complex $n, m$, including half-integers. 

With the abbreviations 
$\Phi\ind{n}{+}(\theta) = \cos n\theta$   and
$\Phi\ind{n}{-}(\theta) = \sin n\theta$,  the  
\textit{interior toroidal harmonic functions} are defined \cite[p.\ 435]{Hob1931}   as
\begin{align} \label{eq:interiorharmonic}
  I\ind{n,m}{\nu,\mu}(x)= 
  \chcos \, Q\ind{n-1/2}{m}(\cosh\eta)  \,
   \Phi\ind{n}{\nu}(\theta) \,  \Phi\ind{m}{\mu}(\varphi)
\end{align}
for integers $n, m$ with $ n\ge0$, $m\ge0$, and signs
$\nu\in\{-1,1\}$, $\mu\in\{-1,1\}$.  It is well known that the
toroidal harmonics are a complete system in the space $\har(\dom)$ of
square integrable real-valued harmonic functions.


\subsection{Reduced quaternions and monogenic functions} \label{Section_Reduced-quaternions}

We denote the quaternions by
\begin{align*}
\H=\{a_0+a_1e_1+a_2e_2+a_3e_3,\ a_i\in\R\},
\end{align*}
where the nonreal units
    $e_1,e_2,e_3$ (often denoted by $i,j,k$) anticommute and satisfy
$e_1e_2e_3=-1$. A \textit{(left) monogenic function} is a
function $f\colon\dom\to\H$ of the form $f=f_0+f_1e_1+f_2e_2+f_3e_3$ such that $\partialbar f=0$, where
\begin{align*}
  \partialbar = \partial_0 + \partial_1e_1 + \partial_2e_2
\end{align*}
with $\partial_i = \partial/\partial x_i$ denotes the usual generalized Cauchy-Riemann (or Fueter) operator. Consult 
\cite{GuerlebeckHabethaSproessig2008,GuerlebeckHabethaSproessig2016,Sudbery1979} for further information 
on monogenic functions.

Let $\M(\H)=\M_{\eta_0}(\H)$ denote the real-linear space of square-integrable $\H$-valued
monogenic functions in $\dom$.  Since the Laplacian in $\R^3$ factors
as $\Delta=\partialbar\partial=\partial\partialbar$ where
\begin{align*}
\partial = \partial_0 - \partial_1e_1 - \partial_2e_2,
\end{align*}
the real-valued components $f_0,f_1,f_2,f_3$ of every
$f\in\M(\H)$ are in $\har(\dom)$. Now, let
\begin{align*}
\A = \{a\in\H\colon\ a_3=0\}
\end{align*}
denote the three-dimensional subspace of \textit{reduced quaternions}, and $\M(\A)$ the subset of $\M(\H)$ formed of
$\A$-valued monogenic functions.  Then $\partial h\in\M(\A)$ when
$h\in\har(\dom)$.
 
To put our subject in context, it is helpful to bear in mind the
standard construction of monogenic functions defined in a ball
\cite{Cac2004,CGBHanoi2005,Cacao2010,MoraisGurlebeck2012} by applying $\partial$
to the solid spherical harmonics, represented in spherical coordinates
by $U\ind{n,m}{\pm} = r^nP_n^m(\cos\theta)\Phi\ind{m}{\pm}(\varphi)$, where $P_n^m$ denotes the associated Legendre function of the first kind. It 
is proved that the monogenic functions
$X\ind{n,m}{\pm}=\partial U\ind{n+1,m}{\pm}$ thus obtained form a
complete set in the space of $\A$-valued $L^2$ monogenic functions in
the three-ball; in fact, they are an orthogonal Hilbert-space
basis. These results have been extended to monogenic functions defined
in a spheroid in
\cite{Morais2011,Morais2013,Morais2019,MoraisHabilitation2021}.

After a closer examination of the
behavior of toroidal harmonics, we will see, however (see Theorem
\ref{theo:basisAvalued} below), that the structure of the monogenic
functions on a torus is very different: application of the operator $\partial$ to
interior toroidal harmonics falls far short of giving a complete set in
$\M(\A)$. One reason is since the origin in $\R^3$ is not in $\dom$,
functions analogous to $\re z^n$ and $\im z^n$ are not eliminated for
negative $n$; the fact that $\dom$ is not simply connected adds a
further complication.  However, the most surprising difference results
from the fact that $\partial_0$ shifts the index from $n$ to $n+1$ rather
than $n-1$, as in the case of the ball (see Section \ref{sec:appell} below).
 
To take into account the topology of the torus, we associate to an $\A$-valued function $f = f_0+f_1e_1+f_2e_2$ the real differential 1-form
\begin{align}  \label{eq:omegaf}
  \omega_f = f_0\,d x_0 - f_1\,d x_1-  f_2\,d x_2  
\end{align}
which may be thought of as the scalar part of  $f(x)(d x_0e_0+d x_1e_1+d x_2e_2)$. It is easily verified that $\omega_f$ is a closed form 
precisely when $\overline{f}$ is irrotational (i.e., ${\rm curl}\ \overline{f} = 0$). In particular, $\omega_f$ is a closed form when $f$ is an $\A$-valued monogenic
function. When $f$ is constructed as $f=\partial h$ for a real-valued harmonic function $h$, the form $\omega_f=dh$ is exact.
This gives the following.

\begin{proposition} If $f$ is an $\A$-valued monogenic function, then $\omega_f$ is a closed form.
  If further $f$ is defined in a simply-connected domain in $\R^3$, the line integral
  $\int_a^b\omega_f$ does not depend on the curve from $a$ to $b$ in
  that domain.
\end{proposition}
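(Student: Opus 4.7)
The proof is essentially a direct computation followed by a standard appeal to the Poincar\'e lemma. Let me sketch the two parts.

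For the closedness of $\omega_f$, the plan is to write the monogenic condition $\partialbar f=0$ in components and then compute $d\omega_f$ directly, matching the results. With $f=f_0+f_1e_1+f_2e_2$, expansion of $\partialbar f$ using $e_1^2=e_2^2=-1$ and $e_1e_2=e_3=-e_2e_1$ gives four real equations from the $1, e_1, e_2, e_3$ components: a scalar one (which will not enter the proof) together with
\begin{align*}
\partial_0f_1+\partial_1f_0=0,\qquad \partial_0f_2+\partial_2f_0=0,\qquad \partial_1f_2-\partial_2f_1=0.
\end{align*}
On the other hand, a straightforward computation of the exterior derivative of $\omega_f=f_0\,dx_0-f_1\,dx_1-f_2\,dx_2$ yields
\begin{align*}
d\omega_f = -(\partial_0f_1+\partial_1f_0)\,dx_0\wedge dx_1-(\partial_0f_2+\partial_2f_0)\,dx_0\wedge dx_2+(\partial_2f_1-\partial_1f_2)\,dx_1\wedge dx_2.
\end{align*}
The three displayed monogenicity relations kill each coefficient, so $d\omega_f=0$. (As a sanity check, these are also exactly the three components of $\operatorname{curl}\overline f=0$ for the vector field $\overline f=(f_0,-f_1,-f_2)$, in agreement with the remark preceding the statement.)

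For the path-independence assertion, I would simply invoke the standard consequence of the Poincar\'e lemma: on a simply-connected domain in $\R^3$, every closed $1$-form is exact, so there exists a smooth real potential $F$ with $dF=\omega_f$, and hence $\int_a^b\omega_f=F(b)-F(a)$ depends only on the endpoints. Alternatively, one can argue directly via Stokes' theorem: given two piecewise smooth curves from $a$ to $b$, their concatenation (with one reversed) is a closed loop which bounds a $2$-chain $\Sigma$ by simple connectedness, and the integral around the loop equals $\int_\Sigma d\omega_f=0$.

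There is no real obstacle here; the only thing worth doing carefully is the component bookkeeping for $\partialbar f$, since sign errors are easy when multiplying out the three basis units. Once those signs are matched against the coefficients in $d\omega_f$, the rest is classical vector analysis.
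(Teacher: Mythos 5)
Your proof is correct and follows the same route the paper indicates (the paper leaves the verification implicit, merely noting that $\omega_f$ is closed precisely when $\overline{f}$ is irrotational): you extract the $e_1$, $e_2$, $e_3$ components of $\partialbar f=0$ and match them against the coefficients of $d\omega_f$, with all signs checking out, and then path-independence on a simply-connected domain is the standard consequence of closedness. Nothing is missing.
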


\begin{definition}
Let $f \in \M(\A)$ be a monogenic function in $\dom$. We define the \textit{cohomology coefficient} of $f$ in $\dom$ by integration along the unit circle
  in the plane $x_0=0$:
  \begin{align}   \label{eq:defcoh}
    \coh f = \frac{1}{2\pi}\int_{\eta=\infty} \, d\omega_f .
  \end{align}
\end{definition}
The orientation of $\{\eta=\infty\}$ is given by the parametrization $(0,\cos t, \sin t)$.
From this definition, $\coh f=0$ when $f$ is an \textit{exact monogenic function}, i.e., $f = \partial h$. (The 
converse is also true, but we will not need this fact.)


\subsection{$\A$-valued monogenic constants}

In this section we will continue to assume that $f_3=0$.
A \textit{monogenic constant} is a function $f$ for which $\partialbar f =\partial f =0$.  Write $\MC(\A)$ for the 
subspace of $\A$-valued monogenic constants on $\dom$. Since these functions satisfy that 
$\partial_0f=0$ and $ \partial_1fe_1 + \partial_2fe_2=0$,
the following well-known fact is immediate.

\begin{proposition} \label{prop:monogconst}
  Let $f\colon\dom\to\A$ be a monogenic constant. Then $f_0\in\R$ is a constant, while
  $f_1,f_2$ are functions of $x_1,x_2$ and $f_1-if_2$ is a holomorphic function
  of the complex variable $x_1+ix_2$. 
\end{proposition}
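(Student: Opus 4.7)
The plan is to unpack the two equations $\partialbar f=0$ and $\partial f=0$ componentwise and recognize the Cauchy-Riemann equations for $f_1-if_2$. As the excerpt already indicates, adding and subtracting the defining equations for a monogenic constant gives the two simpler conditions $\partial_0 f=0$ and $\partial_1 f\, e_1+\partial_2 f\, e_2=0$. The first immediately yields $\partial_0 f_0=\partial_0 f_1=\partial_0 f_2=0$, so every component is independent of $x_0$; in particular $f_1$ and $f_2$ are already functions of $(x_1,x_2)$ alone.

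Next I would substitute $f=f_0+f_1e_1+f_2e_2$ into $\partial_1 f\, e_1+\partial_2 f\, e_2=0$ and collect by the basis $\{1,e_1,e_2,e_3\}$, using $e_1^2=e_2^2=-1$ and $e_1e_2=-e_2e_1=e_3$. The scalar part produces $\partial_1 f_1+\partial_2 f_2=0$, the $e_1$ and $e_2$ parts give $\partial_1 f_0=0$ and $\partial_2 f_0=0$, and the $e_3$ part gives $\partial_2 f_1-\partial_1 f_2=0$. Combined with $\partial_0 f_0=0$, the three vanishing partials of $f_0$ force $f_0$ to be a real constant.

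Finally, setting $g(z)=f_1-if_2$ with $z=x_1+ix_2$, a direct computation of $\partial g/\partial\bar z=\tfrac12(\partial_1+i\partial_2)(f_1-if_2)$ gives $\tfrac12\bigl((\partial_1 f_1+\partial_2 f_2)+i(\partial_2 f_1-\partial_1 f_2)\bigr)$, which is zero by the two relations just obtained. Hence $g$ is holomorphic in $x_1+ix_2$, completing the proof. There is no real obstacle here; the only point requiring care is the sign/order bookkeeping of the quaternionic multiplication convention (acting on the right by $e_1,e_2$, consistent with how $\omega_f$ was defined in \eqref{eq:omegaf}), which makes the identifications of the scalar and $e_3$ components line up as the divergence-free and curl-free conditions for $(f_1,f_2)$ that give Cauchy-Riemann.
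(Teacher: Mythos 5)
Your proof is correct and matches the paper's intent: the paper offers no written proof, declaring the result ``immediate'' from the two conditions $\partial_0 f=0$ and $\partial_1 f\,e_1+\partial_2 f\,e_2=0$, and your componentwise expansion is precisely the routine verification being omitted. Note also that the conclusion is insensitive to the left/right multiplication convention you worry about at the end, since either ordering yields the same four scalar equations (the $e_3$ component only changes sign).
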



Further, if $f\in\M(\A)$ and the scalar part $f_0$ of $f$ is a constant, then $f\in\MC(\A)$. Proposition \ref{prop:monogconst} and the geometry of the torus lead us to consider the following.
\begin{definition}
Let $m\in\Z$. For $x\in\R^3$ define the scalar-valued functions
\begin{align}\label{eq:defJ}
   J_m^+(x) = \re (x_1+ix_2)^m, \quad  J_m^-(x) = \im(x_1+ix_2)^m .
\end{align}
\end{definition}

These functions are well-defined in $\dom$ and independent of $x_0$. Clearly $\pm J_m^\mp$ is a harmonic conjugate of $J_m^\pm$ when considered
as functions of the complex variable $x_1+ix_2$. Also let
\begin{align} \label{eq:Jhat}
    \widehat J(x) = -\log|x_1+ix_2|,
\end{align}
which does not admit a harmonic conjugate since $\arg|x_1+ix_2|$ is
not single-valued. It is easily verified by the use of Laurent series that
any $u\in\har(\dom)$, which is independent of $x_0$, has a unique
representation as
\begin{align} \label{eq:uniqueharmonicseries}
  u = a\widehat J + \sum_{m=-\infty}^\infty( a_m^+ J_m^+ +a_m^- J_m^-)
\end{align}
with real coefficients $a,a_m^\pm$. In this and all similar sums,
$J_0^-$ is to be excluded since it is identically zero. It also follows that
\begin{align*}
  \partial_0 J_m^\pm=0, \quad \partial_1 J_m^\pm = mJ_{m-1}^\pm, \quad
  \partial_2 J_m^\pm = \mp mJ_{m-1}^\mp.
\end{align*}

\begin{definition}
We introduce the \textit{basic monogenic constants} on $\dom$,
\begin{align}  \label{eq:defW}
  W_m^\pm = J_m^\pm e_1 \mp J_m^\mp e_2
\end{align}
for all integer $m$.
\end{definition}

We have
\begin{align}  
  W_m^\pm &= \frac{-1}{m+1}\,\partial J_{m+1}^\pm, \quad
  m\not=-1,  \nonumber\\
  W_{-1}^+ & = \partial\widehat J. \label{eq:appellW}
\end{align}
 However, $W_{-1}^-$ is not obtained by applying $\partial$
 to a real-valued harmonic function, although locally it is equal
 to the $\A$-valued function $-\partial \arctan(x_2/x_1)$. In particular,
 \begin{align} \label{eq:cohw-1}
   \coh W_{-1}^-=1    
 \end{align}
 while the
 other basic monogenic constants $W_m^\pm$ have vanishing cohomology coefficient.

\begin{proposition}
For all $m\in\Z$, we have  the following representation in terms of the toroidal harmonics:
\begin{align} \label{eq:JfromI}  
  J_m^\pm =  \sum_{n=0}^\infty  j_{n,m} \, I\ind{n,|m|}{+,\pm}   
\end{align}
where
 \begin{align*}
   j_{n,m}  = \left\{
   \begin{array}{ll}
    \displaystyle (1+\delta_{0,m}) (-1)^m \sqrt{\frac{2}{\pi}}\frac{1}{\Gamma(m+\frac{1}{2})}, & m\ge0, \\[2ex]
      \displaystyle \pm  2 (-1)^m \sqrt{\frac{2}{\pi}}\frac{1}{\Gamma(m+\frac{1}{2})}\frac{\Gamma(n+m+1/2)}{\Gamma(n-m+1/2)}, \ & m<0.
   \end{array} \right.
 \end{align*}
We use the Kronecker symbol: $\delta_{m_1,m_2} = 0$ or $1$, according as $m_1 \neq m_2$, or $m_1 = m_2$.
\end{proposition}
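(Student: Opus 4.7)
The plan is to reduce the three-dimensional expansion \eqref{eq:JfromI} to a one-dimensional Fourier cosine identity in the toroidal angle $\theta$ and then invoke a Heine--Hobson type expansion for the associated Legendre functions of the second kind. Substituting the toroidal coordinates \eqref{eq:toroidalcoords} gives $x_1+ix_2 = \sinh\eta\,e^{i\varphi}/(\cosh\eta-\cos\theta)$, so that for $m\geq0$
\begin{align*}
J_m^\pm = \frac{\sinh^m\eta}{(\cosh\eta-\cos\theta)^m}\,\Phi_m^\pm(\varphi),
\end{align*}
while for $m<0$, writing $M=|m|$, a direct computation of $(x_1+ix_2)^{-M} = |x_1+ix_2|^{-M}e^{-iM\varphi}$ yields
\begin{align*}
J_m^+ = \frac{(\cosh\eta-\cos\theta)^M}{\sinh^M\eta}\cos(M\varphi),\qquad J_m^- = -\frac{(\cosh\eta-\cos\theta)^M}{\sinh^M\eta}\sin(M\varphi).
\end{align*}
The extra sign in $J_m^-$ for $m<0$ is absorbed into the $\pm$ appearing in the formula for $j_{n,m}$.

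Each basis element $I\ind{n,|m|}{+,\mu}$ contains the common factor $\sqrt{\cosh\eta-\cos\theta}\,\Phi_{|m|}^\mu(\varphi)$, so cancelling $\Phi_{|m|}^\mu(\varphi)$ from both sides of \eqref{eq:JfromI} reduces the proposition to the scalar Fourier-cosine identity
\begin{align*}
\frac{\sinh^m\eta}{(\cosh\eta-\cos\theta)^{m+1/2}} = \sum_{n\geq 0} j_{n,m}\,Q_{n-1/2}^{|m|}(\cosh\eta)\,\cos(n\theta)\qquad (m\geq0),
\end{align*}
together with the analogous identity with $(\cosh\eta-\cos\theta)^{|m|-1/2}/\sinh^{|m|}\eta$ on the left when $m<0$. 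Since the left-hand side is smooth, even, and $2\pi$-periodic in $\theta$ for each $\eta>\eta_0$, it has a unique Fourier cosine expansion. The stated coefficients would then be verified by computing the Fourier integrals $(\pi(1+\delta_{0,n}))^{-1}\int_{-\pi}^\pi(\textrm{LHS})\cos(n\theta)\,d\theta$ and matching them to a multiple of $Q_{n-1/2}^{|m|}(\cosh\eta)$ via the integral representation of $Q_{n-1/2}^m$ quoted in the excerpt (or an equivalent Dirichlet--Mehler form) after a suitable change of variable such as $s=\cos\theta$.

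For $m<0$, one can either repeat the Fourier computation directly with the new left-hand side, or derive the identity economically from the $m\geq0$ case combined with the Whipple--Schl\"afli reflection $Q_{n-1/2}^{-M}(z) = [\Gamma(n-M+1/2)/\Gamma(n+M+1/2)]\,Q_{n-1/2}^{M}(z)$; the $\Gamma$-ratio $\Gamma(n+m+1/2)/\Gamma(n-m+1/2)$ appearing in the stated $j_{n,m}$ for $m<0$ is produced exactly by this reflection. The technical heart of the argument, and the step I expect to be the main obstacle, is matching the Fourier integral $\int_{-\pi}^\pi \cos(n\theta)/(\cosh\eta-\cos\theta)^{m+1/2}\,d\theta$ with the $s$-integral in the definition of $Q_{n-1/2}^m$: the two integrands are not related by an elementary substitution, so one needs either a Dirichlet--Mehler-type representation of $Q_{n-1/2}^m$ or an explicit contour/residue computation to produce the precise normalisation $(-1)^m\sqrt{2/\pi}/\Gamma(m+1/2)$ that appears in the stated $j_{n,m}$.
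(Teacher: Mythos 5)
Your reduction is exactly the one the paper uses: substituting \eqref{eq:toroidalcoords} gives $(x_1+ix_2)^m=\sinh^m\eta\,e^{im\varphi}/(\cosh\eta-\cos\theta)^m$, and after stripping off $\Phi\ind{|m|}{\pm}(\varphi)$ and the factor $\chcos$ the proposition becomes a Fourier--cosine expansion of $(\cosh\eta-\cos\theta)^{-(m+1/2)}$ in $\theta$ with coefficients proportional to $Q_{n-1/2}^{|m|}(\cosh\eta)$. Your treatment of $m<0$ via the order-reflection formula for $Q_{n-1/2}^{-M}$ is also the correct mechanism and accounts precisely for the ratio $\Gamma(n+m+1/2)/\Gamma(n-m+1/2)$ in $j_{n,m}$.

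The problem is that the one step carrying all the analytic content --- evaluating $\int_{-\pi}^{\pi}(\cosh\eta-\cos\theta)^{-(m+1/2)}\cos(n\theta)\,d\theta$ and identifying it as the stated multiple of $Q_{n-1/2}^{m}(\cosh\eta)$ --- is announced but never performed. You flag it yourself as ``the main obstacle'' and correctly observe that the substitution $s=\cos\theta$ does not elementarily match the integral representation of $Q_{n-1/2}^m$ quoted in the paper; no Dirichlet--Mehler representation or contour argument is actually supplied. The paper closes exactly this step by citation: the generalized Heine identity of Cohl and Dominici \cite{CD2011}, reproduced as \eqref{eq:cohldominici}, holds for all complex $\alpha$, and setting $\alpha=m+\frac12$ yields the expansion immediately, with the factor $e^{-i\pi(\alpha-1/2)}$ producing the $(-1)^m$ and the validity for negative order making the $m<0$ case a one-line consequence of the reflection formula you quote. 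So the repair is simple --- invoke the generalized Heine identity rather than rederive it --- but as written the proposal does not establish the proposition. (As an aside, the $n$-dependent factor $(1+\delta_{0,n})$ in \eqref{eq:cohldominici} indicates that the factor $(1+\delta_{0,m})$ in the stated $j_{n,m}$ for $m\ge 0$ should depend on $n$ rather than $m$; your own normalisation $(\pi(1+\delta_{0,n}))^{-1}$ points the same way.)
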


\begin{proof}
A Fourier series for powers of $\cosh\eta-\cos\theta$
  with constant $\eta$ was given in \cite[Eq.\ (3.10)]{CD2011},
  valid for all $\alpha\in\C$:
\begin{align} \label{eq:cohldominici}
  (\cosh\eta -\cos\theta)^{-\alpha} = \frac{1}{\Gamma(\alpha)}\sqrt{\frac{2}{\pi}}
    \frac{e^{-i\pi(\alpha-1/2)}}{(\sinh\eta)^{\alpha-1/2}}
    \sum_{n=0}^\infty  (1+\delta_{0,n}) \,    Q_{n-1/2}^{\alpha-1/2}(\cosh\eta) \cos(n\theta) .
\end{align}
By \eqref{eq:toroidalcoords},
\[ (x_1+ix_2)^m =
  \frac{\sinh^m\eta}{(\cosh\eta-\cos\theta)^m} \, e^{im\varphi}, \] and
when we look at the series for
$J_m^\pm(x)/\sqrt{\cosh\eta-\cos\theta}$, i.e., with $\alpha=m+1/2$,
the values of coefficients $j_{n,m}$ become apparent.
\end{proof}

By \eqref{eq:defW} and \eqref{eq:JfromI}, we can express the basic
monogenic constants as
\begin{align} \label{eq:WfromI}
  W_m^\pm = \sum_{n=0}^\infty j_{n,m}(I\ind{n,m}{+,\pm}e_1 \mp I\ind{n,m}{+,\mp}e_2)
\end{align}
for all $m \in \Z$, although the individual summands are generally not monogenic.
It can further be shown that

\begin{proposition} \label{prop:monogconstrep}
  Every  $\varphi\in\MC(\A)$ is of the form
  \begin{align*}
    \varphi = a_0 + \sum_{m=-\infty}^\infty \
    (a_m^+ W_m^+ + a_m^- W_m^-)  ,
  \end{align*}
where $a_0,a^\pm_m \in \R$, converging uniformly on compact subsets.
\end{proposition}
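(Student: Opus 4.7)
The plan is to invoke Proposition \ref{prop:monogconst} to identify $\varphi = \varphi_0 + \varphi_1 e_1 + \varphi_2 e_2 \in \MC(\A)$ with the pair $(a_0, g)$ consisting of a real constant $a_0 = \varphi_0$ and a holomorphic function $g(z) := \varphi_1 - i\varphi_2$ of $z = x_1 + ix_2$, and then to apply the classical Laurent expansion to $g$ and match coefficients with the basic monogenic constants $W_m^\pm$.

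Because $\varphi_1, \varphi_2$ are single-valued on $\dom$ and independent of $x_0$, the function $g$ descends to a single-valued holomorphic function on the projection $\pi(\dom)$ of $\dom$ onto the $(x_1,x_2)$-plane. A direct inspection of \eqref{eq:toroidalcoords} shows that $\dom$ is obtained by rotating about the $x_0$-axis an open disk in the $(x_0, r)$-half-plane (with $r = \sqrt{x_1^2+x_2^2}$) whose horizontal diameter runs from $r = \tanh(\eta_0/2)$ to $r = \coth(\eta_0/2)$; hence $\pi(\dom)$ is the open annulus
\begin{align*}
A = \bigl\{z \in \C :\ \tanh(\eta_0/2) < |z| < \coth(\eta_0/2)\bigr\},
\end{align*}
and $g$ admits a Laurent expansion $g(z) = \sum_{m \in \Z} c_m z^m$ converging uniformly on compact subsets of $A$.

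Writing the coefficients in real form $c_m = a_m^+ - i a_m^-$ and using $z^m = J_m^+ + i J_m^-$ from \eqref{eq:defJ}, a short computation gives $\re g = \sum_m (a_m^+ J_m^+ + a_m^- J_m^-)$ and $\im g = \sum_m (a_m^+ J_m^- - a_m^- J_m^+)$, so that by the definition \eqref{eq:defW} of $W_m^\pm$,
\begin{align*}
\varphi_1 e_1 + \varphi_2 e_2 = (\re g)\,e_1 - (\im g)\,e_2 = \sum_{m \in \Z} \bigl( a_m^+ W_m^+ + a_m^- W_m^- \bigr).
\end{align*}
Adding the scalar $a_0$ produces the stated representation. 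Uniform convergence on compact subsets of $\dom$ is inherited from that on $A$, since $\varphi$ does not depend on $x_0$ and $\pi(K)$ is compact in $A$ whenever $K \subset \dom$ is compact.

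The only mildly delicate step is the geometric identification $\pi(\dom) = A$; once that is in hand, the proposition is essentially Laurent's theorem translated into the real basis $\{W_m^\pm\}_{m \in \Z}$, so I do not anticipate a serious obstacle.
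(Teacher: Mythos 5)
Your proof is correct, and it reaches the same conclusion by a mildly different route than the paper. The paper expands the single harmonic component $\varphi_1$ using the general representation \eqref{eq:uniqueharmonicseries} of $x_0$-independent harmonic functions, which a priori contains the term $\hat a\,\widehat J$ with $\widehat J=-\log|x_1+ix_2|$; it then uses the fact that $\varphi_1$ admits the single-valued harmonic conjugate $-\varphi_2$ to force $\hat a=0$, and recovers $\varphi_2$ as the conjugate series plus a constant. You instead apply Laurent's theorem directly to the single-valued holomorphic function $g=\varphi_1-i\varphi_2$ on the annulus $\tanh(\eta_0/2)<|z|<\coth(\eta_0/2)$, so the logarithmic obstruction never arises and both components are produced simultaneously. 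The underlying tool is the same --- a Laurent expansion over the projected annulus, repackaged in the real basis $\{W_m^\pm\}$ --- but your organization is slightly more economical, at the cost of having to justify the geometric identification $\pi(\dom)=A$, which you do correctly (and which the paper independently confirms when it takes the Teodorescu domain $D$ to be the annulus with radii $\sinh\eta_0/(\cosh\eta_0\pm1)$, i.e.\ exactly $\tanh(\eta_0/2)$ and $\coth(\eta_0/2)$). Your coefficient matching $c_m=a_m^+-ia_m^-$, $z^m=J_m^++iJ_m^-$, leading to $(\re g)\,e_1-(\im g)\,e_2=\sum_m(a_m^+W_m^++a_m^-W_m^-)$, checks out against \eqref{eq:defW}.
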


\begin{proof}
  Write $\varphi=a_0+\varphi_1e_1+\varphi_2e_2$ according to Proposition
  \ref{prop:monogconst}.  Express
  $\varphi_1=\hat a\widehat J + \sum_{-\infty}^\infty (a_m^+ J_m^++a_m^- J_m^-)$
  according to \eqref{eq:uniqueharmonicseries}. Since $\varphi_1$ admits
  $-\varphi_2$ as a harmonic conjugate and the sum
  $ \sum_{-\infty}^\infty (a_m^+ J_m^++a_m^- J_m^-)$ admits
  $\sum (a_m^+ J_m^- - a_m^- J_m^+) $ as a harmonic conjugate, it
  follows that $\hat a\widehat J$ also admits a harmonic conjugate, so
  $\hat a=0$. It also follows that  $\varphi_2$ differs from $\sum (a_m^+ J_m^- - a_m^- J_m^+)$
  by an additive real constant $a_0$. 
\end{proof}


\section{Appell basis for toroidal harmonics \label{sec:appell}}

The solid spherical harmonics $U\ind{n,m}{\pm}$  satisfy \cite{MoraisGurlebeck2012} a well-known ``Appell'' property with respect to
the partial derivative $\partial_0$, that is,
$\partial_0U\ind{n,m}{\pm}$ is a constant multiple of
$U\ind{n-1,m}{\pm}$. Similarly, the spherical monogenic functions
$X\ind{n,m}{\pm}$ satisfy the Appell-type property that
$\partial X\ind{n,m}{\pm}$ is a constant multiple of
$X\ind{n-1,m}{\pm}$ \cite{Cac2004}. However, the toroidal harmonics behave somewhat differently:  
\begin{proposition} \label{prop:partialI0}
  Let $n,m\ge1$. Then for all combinations $\nu,\mu=\pm1$,
  \begin{align*}
    \partial_0 I\ind{0,0}{\nu,\mu} &= -\frac{1}{2}I\ind{1,0}{-\nu,\mu}, \\
    \partial_0 I\ind{n,0}{\nu,\mu} &= (\nu)\big(-\frac{2n-1}{4}I\ind{n-1,0}{-\nu,\mu} 
    + n I\ind{n,0}{-\nu,\mu}  - \frac{2n+1}{4}  I\ind{n+1,0}{-\nu,\mu} \big)
                                    ,  \\
    \partial_0 I\ind{0,m}{\nu,\mu} &=  \frac{2m-1}{2}I\ind{1,m}{-\nu,\mu}
                                      ,  \\
    \partial_0 I\ind{n,m}{\nu,\mu} &= (\nu) \big(
       -\frac{2n+2m-1}{4}I\ind{n-1,m}{-\nu,\mu} 
       + n I\ind{n,m}{-\nu,\mu} - \frac{2n+2m+1}{4} I\ind{n+1,m}{-\nu,\mu} \big).
  \end{align*}
\end{proposition}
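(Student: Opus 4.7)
The strategy is a direct computation: express $\partial_0$ in toroidal coordinates, apply it to the factored form of $I\ind{n,m}{\nu,\mu}$, and collapse the result using the classical Legendre recurrences together with the product-to-sum trigonometric identities.

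First, I would rewrite $\partial_0$ in the toroidal frame. Since $\partial_0$ is invariant under rotations about the $x_0$-axis, only $\partial_\theta$ and $\partial_\eta$ enter. Differentiating the defining relations $x_0 = \sin\theta/D$ and $\sqrt{x_1^2+x_2^2} = \sinh\eta/D$ (with $D=\cosh\eta-\cos\theta$) and inverting the resulting $2\times 2$ Jacobian gives
\begin{align*}
\partial_0 = (\cos\theta\cosh\eta - 1)\,\partial_\theta - \sin\theta\sinh\eta\,\partial_\eta.
\end{align*}

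Next, I would apply this operator to $I\ind{n,m}{\nu,\mu}=\chcos\,Q_{n-1/2}^m(\cosh\eta)\,\Phi_n^\nu(\theta)\,\Phi_m^\mu(\varphi)$ via the product rule, using the elementary identities $\partial_\theta\chcos=\sin\theta/(2\chcos)$, $\partial_\eta\chcos=\sinh\eta/(2\chcos)$, and $(\Phi_n^\nu)'(\theta)=-n\nu\,\Phi_n^{-\nu}(\theta)$ — the last of these supplying the parity flip $\nu\to-\nu$. The terms involving $1/\chcos$ combine through the algebraic identity $(\cos\theta\cosh\eta-1)-\sinh^2\eta=-D\cosh\eta$, which restores $\chcos$ as an overall factor and leaves an expression depending only on $\cosh\eta\cdot Q_{n-1/2}^m$, $\sinh^2\eta\cdot(Q_{n-1/2}^m)'(\cosh\eta)$, and the trigonometric products $\sin\theta\,\Phi_n^\nu$, $\cos\theta\,\Phi_n^{-\nu}$.

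To reach the three-term shape announced, I would convert the $\theta$-products using
\begin{align*}
\sin\theta\,\Phi_n^\nu &= \tfrac{\nu}{2}\bigl(\Phi_{n+1}^{-\nu} - \Phi_{n-1}^{-\nu}\bigr), & \cos\theta\,\Phi_n^{-\nu} &= \tfrac{1}{2}\bigl(\Phi_{n+1}^{-\nu} + \Phi_{n-1}^{-\nu}\bigr),
\end{align*}
and reduce the $\eta$-factors via the standard three-term recurrence and Legendre derivative identity for half-integer degree,
\begin{align*}
2n\cosh\eta\,Q_{n-1/2}^m &= (n-m+\tfrac{1}{2})\,Q_{n+1/2}^m + (n+m-\tfrac{1}{2})\,Q_{n-3/2}^m,\\
\sinh^2\eta\,(Q_{n-1/2}^m)'(\cosh\eta) &= (n-\tfrac{1}{2})\cosh\eta\,Q_{n-1/2}^m - (n+m-\tfrac{1}{2})\,Q_{n-3/2}^m.
\end{align*}
After substitution, contributions outside the indices $n-1, n, n+1$ cancel pairwise, and the remaining coefficients are read off to match the claim.

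Finally, I would treat the boundary cases. For $n=0$ the coefficient $n$ annihilates the middle term, and the putative $I\ind{-1,m}{-\nu,\mu}$ contribution merges with $I\ind{1,m}{-\nu,\mu}$ via $\Phi_{-1}^{-\nu}=-\nu\,\Phi_1^{-\nu}$ together with the symmetry $Q_{-3/2}^m=Q_{1/2}^m$ (valid because $\cot(\pi(n-\tfrac{1}{2}))=0$ for integer $n$ renders the toroidal $Q$'s symmetric under degree reflection), producing the two-term $n=0$ formulas. The $m=0$ formulas follow by direct specialization. The main obstacle is purely algebraic bookkeeping: keeping careful track of signs and of the many half-integer coefficients so that the indicated pairwise cancellations occur cleanly and the overall factor $(\nu)$ emerges in the symmetric form stated in the proposition.
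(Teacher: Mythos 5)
Your proposal is correct and follows essentially the same route as the paper's own proof: the same expression of $\partial_0$ as $(\cosh\eta\cos\theta-1)\,\partial_\theta-\sinh\eta\sin\theta\,\partial_\eta$, the same product rule on the factored form, the same two half-integer Legendre recurrences, and the same product-to-sum reductions in $\theta$, merely carried out for general $(n,m,\nu,\mu)$ instead of only the representative case $I_{n,0}^{+,+}$ worked in the paper. One caveat: your computation actually yields the coefficient $-\tfrac{1}{2}(n-m+\tfrac{1}{2})=-\tfrac{2n-2m+1}{4}$ for $I_{n+1,m}^{-\nu,\mu}$, which agrees with the paper's subsequent table of symbols $\kappa^{n}_{n+1,m}$ but not with the displayed $-\tfrac{2n+2m+1}{4}$ in the proposition (apparently a typo there), so for $m\ge1$ the coefficients do not literally ``match the claim'' as stated.
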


These expressions are the scalar parts of the toroidal monogenics $T\ind{n,m}{\nu,\mu}$ (see Definition \ref{defi:monogbasics} below).
For completeness, we provide the $e_1$ and $e_2$ components of
$T\ind{n,m}{\nu,\mu}$ as well, although we will not use the explicit formulas in the
sequel.
\begin{proposition}  \label{prop:partialI12}
For all combinations $\nu,\mu=\pm1$, we have
 \begin{align*}
    \partial_1 I\ind{0,0}{\nu,\mu} &= I\ind{0,1}{\nu,\mu} - I\ind{1,1}{\nu,\mu}, \\
    \partial_1 I\ind{n,0}{\nu,\mu} &=  -\frac{1}{2}I\ind{n-1,1}{\nu,\mu} 
    + I\ind{n,1}{\nu,\mu} -\frac{1}{2} I\ind{n+1,1}{\nu,\mu}  
                                     \quad (n\ge1),  \\
    \partial_1 I\ind{0,m}{\nu,\mu} &= (\nu)\big( -\frac{(2m-3)(2m-1)}{8}I\ind{1,m-1}{\nu,\mu} 
     -\frac{1}{2} I\ind{1,m+1}{\nu,\mu} - \frac{(2m-1)(2m+1)}{8} I\ind{0,m-1}{\nu,\mu}\\
   & \quad\  +\frac{1}{2} I\ind{0,m+1}{\nu,\mu} \big)           \quad (n\ge1),  \\
    \partial_1 I\ind{n,m}{\nu,\mu} &=  
        -\frac{(2n+2m-3)(2n+2m-1)}{16}I\ind{n-1,m-1}{\nu,\mu} 
        -\frac{1}{4} I\ind{n-1,m+1}{\nu,\mu}, \\
     & \quad\ +\frac{(2n+2m-1)(2n-2m+1)}{16} I\ind{n,m-1}{\nu,\mu}
        +\frac{1}{2} I\ind{n,m+1}{\nu,\mu} , \\     
     & \quad\ -\frac{(2n-2m+1)(2n-2m+3)}{16} I\ind{n+1,m-1}{\nu,\mu}
        -\frac{1}{4} I\ind{n+1,m+1}{\nu,\mu}       
                                     \quad (n,m\ge1), 
 \end{align*}
 \begin{align*}
    \partial_2 I\ind{0,0}{\nu,\mu} &= I\ind{0,1}{\nu,-\mu} - I\ind{1,1}{\nu,-\mu}, \\
    \partial_2 I\ind{n,0}{\nu,\mu} &= (\mu)\big( -\frac{1}{2}I\ind{n-1,1}{\nu,-\mu} 
    + I\ind{n,1}{\nu,-\mu} - \frac{1}{2} I\ind{n+1,1}{\nu,-\mu}   \big)
                                     \quad (n\ge1),  \\
    \partial_2 I\ind{0,m}{\nu,\mu} &= (\mu)\big( \frac{(2m-3)(2m-1)}{8}I\ind{1,m-1}{\nu,-\mu} 
     -\frac{1}{2} I\ind{1,m+1}{\nu,-\mu} + \frac{(2m-1)(2m+1)}{8} I\ind{0,m-1}{\nu,-\mu}
       \\
   & \quad\  +\frac{1}{2} I\ind{0,m+1}{\nu,-\mu} \big)          \quad (n\ge1),  \\
    \partial_2 I\ind{n,m}{\nu,\mu} &=  (\mu)\big(
        \frac{(2n+2m-3)(2n+2m-1)}{16}I\ind{n-1,m-1}{\nu,-\mu} 
        -\frac{1}{4} I\ind{n-1,m+1}{\nu,-\mu}, \\
     & \quad\ -\frac{(2n+2m-1)(2n-2m+1)}{16} I\ind{n,m-1}{\nu,-\mu}
        +\frac{1}{2} I\ind{n,m+1}{\nu,-\mu} , \\     
     & \quad\ +\frac{(2n-2m+1)(2n-2m+3)}{16} I\ind{n+1,m-1}{\nu,-\mu}
        -\frac{1}{4} I\ind{n+1,m+1}{\nu,-\mu}       
                                 \big)    \quad (n,m\ge1).
  \end{align*}
\end{proposition}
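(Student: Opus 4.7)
The plan is a direct chain-rule calculation in toroidal coordinates, parallel to the derivation of Proposition \ref{prop:partialI0}. First I would invert the Jacobian of \eqref{eq:toroidalcoords} to express the Cartesian operators as
\[
\partial_1 = \cos\varphi\,A - \sin\varphi\,\frac{\cosh\eta-\cos\theta}{\sinh\eta}\,\partial_\varphi, \qquad
\partial_2 = \sin\varphi\,A + \cos\varphi\,\frac{\cosh\eta-\cos\theta}{\sinh\eta}\,\partial_\varphi,
\]
where $A=(1-\cosh\eta\cos\theta)\partial_\eta - \sinh\eta\sin\theta\,\partial_\theta$ is a ``meridional'' operator acting on $(\eta,\theta)$ alone; the identity $(1-\cosh\eta\cos\theta)^2+\sinh^2\eta\sin^2\theta=(\cosh\eta-\cos\theta)^2$ is what makes these formulas collapse correctly on the coordinate functions, and the passage from $\partial_1$ to $\partial_2$ by a quarter-turn in $\varphi$ already explains the $\mu\leftrightarrow-\mu$ swap and the overall $(\mu)$ factor distinguishing the two families of identities.

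Applying these operators to $I\ind{n,m}{\nu,\mu}=\chcos\,Q_{n-1/2}^m(\cosh\eta)\,\Phi\ind{n}{\nu}(\theta)\,\Phi\ind{m}{\mu}(\varphi)$ by the product rule---using $(\Phi\ind{n}{\nu})'(\theta)=-\nu n\,\Phi\ind{n}{-\nu}(\theta)$ and $(\Phi\ind{m}{\mu})'(\varphi)=-\mu m\,\Phi\ind{m}{-\mu}(\varphi)$---yields a sum of terms carrying the unwanted prefactors $\cosh\eta$, $\cos\theta$, $\sinh\eta$, $\sin\theta$ together with mixed azimuthal products $\cos\varphi\,\Phi\ind{m}{\mu}$ and $\sin\varphi\,\Phi\ind{m}{-\mu}$. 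The azimuthal products collapse via the elementary identities $\cos\varphi\,\Phi\ind{m}{\mu}\mp\mu\sin\varphi\,\Phi\ind{m}{-\mu}=\Phi\ind{m\pm1}{\mu}$, which is precisely what produces the shift $m\to m\pm1$ seen in the statement. The $\theta$-prefactors are then eliminated using $2\cos\theta\,\Phi\ind{n}{\nu}=\Phi\ind{n+1}{\nu}+\Phi\ind{n-1}{\nu}$ and $2\sin\theta\,\Phi\ind{n}{\nu}=\nu(\Phi\ind{n+1}{-\nu}-\Phi\ind{n-1}{-\nu})$---the second of these returns a $\Phi\ind{\cdot}{-\nu}$, but that $-\nu$ cancels against the $-\nu$ coming from $(\Phi\ind{n}{\nu})'$, restoring $\Phi\ind{\cdot}{\nu}$ and explaining why there is no $\nu$-flip on the right-hand side. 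The $\eta$-prefactors are handled by the classical three-term recurrence for $Q_{n-1/2}^m$ in the degree together with the derivative-ladder identity expressing $(\cosh^2\eta-1)(Q_{n-1/2}^m)'(\cosh\eta)$ as a linear combination of $\cosh\eta\,Q_{n-1/2}^m(\cosh\eta)$ and $Q_{n-3/2}^m(\cosh\eta)$.

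The main difficulty is algebraic book-keeping. The $\partial_\eta$- and $\partial_\theta$-branches each contribute, after the recurrences are applied, shifts $n\to n\pm 1$, and these must combine with the $m\to m\pm 1$ shifts to produce the six-term sums in the statement with the correct rational coefficients $\pm(2n+2m\mp1)(2n+2m\mp3)/16$ and $\pm(2n-2m\pm1)(2n-2m\pm3)/16$. These quadratic numerators emerge only after nested applications of the degree recurrence, and at intermediate stages one has to absorb spurious powers of $\chcos$ and $\sinh\eta$ back into the $\chcos$ prefactor in the definition of $I\ind{n',m'}{\nu,\mu}$; this is the step most prone to sign errors. The boundary cases $n=0$ and $m=0$ collapse one of the three-term recurrences to a two-term one, which is what accounts for the simpler expressions in the first and third lines of each family.
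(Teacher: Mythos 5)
Your route is the one the paper itself intends: the text omits this proof, describing it only as ``similar, although more complicated'' to the computation for Proposition \ref{prop:partialI0}, and your ingredients match that template. The decomposition $\partial_1=\cos\varphi\,A-\sin\varphi\,\frac{\cosh\eta-\cos\theta}{\sinh\eta}\,\partial_\varphi$ with the meridional operator $A$ is the correct inversion of the Jacobian of \eqref{eq:toroidalcoords}, the product-to-sum identities in $\varphi$ do produce the shift $m\to m\pm1$ and the $\mu\to-\mu$ swap, and your observation that the two $\nu$-flips in the $\partial_\theta$ branch cancel is exactly why no $\nu$-reversal appears on the right-hand sides.

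There is, however, one concrete gap in the toolbox you list for the $\eta$-dependence. The two Legendre recurrences you invoke --- the three-term recurrence in the degree and the ladder expressing $(t^2-1)(Q_{n-1/2}^m)'(t)$ through $tQ_{n-1/2}^m$ and $Q_{n-3/2}^m$ --- are precisely those used for $\partial_0$ in the proof of Proposition \ref{prop:partialI0}, and both preserve the order $m$. But the targets here contain $I\ind{n',m\pm1}{\nu,\mu}$, whose radial factor is $Q_{n'-1/2}^{m\pm1}(\cosh\eta)$, and no combination of order-preserving recurrences converts $Q_{n-1/2}^m$ into these. Likewise, the leftover factors of $\sinh\eta$ coming from $A$ cannot be ``absorbed into the $\chcos$ prefactor,'' since $I\ind{n',m'}{\nu,\mu}$ carries no explicit $\sinh\eta$; they must be absorbed into the Legendre function itself via the mixed degree-and-order recurrences (up to signs fixed by the paper's normalization of $Q_n^m$)
\begin{align*}
(2\nu+1)\sqrt{t^2-1}\,Q_\nu^m(t) &= Q_{\nu+1}^{m+1}(t)-Q_{\nu-1}^{m+1}(t),\\
(2\nu+1)\sqrt{t^2-1}\,Q_\nu^m(t) &= (\nu-m+1)(\nu-m+2)\,Q_{\nu+1}^{m-1}(t)-(\nu+m)(\nu+m-1)\,Q_{\nu-1}^{m-1}(t),
\end{align*}
evaluated at $\nu=n-\tfrac12$, $t=\cosh\eta$. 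These, rather than nested applications of the degree recurrence, are the source of the quadratic numerators: $(\nu+m)(\nu+m-1)=\tfrac14(2n+2m-1)(2n+2m-3)$ and $(\nu-m+1)(\nu-m+2)=\tfrac14(2n-2m+1)(2n-2m+3)$, matching the coefficients of $I\ind{n-1,m-1}{\nu,\mu}$ and $I\ind{n+1,m-1}{\nu,\mu}$ in the statement. Once these identities are added to your list, the bookkeeping closes and the sketch becomes a proof.
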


The  formulas in Proposition \ref{prop:partialI0} may be expressed succinctly as
\begin{align}  \label{eq:partialI}
  \partial_0 I\ind{n,m}{\nu,\mu} =
  \sum_{k=0}^{\infty}\kappa_{k,m}^{n} I\ind{k,m}{-\nu,\mu} =
  \sum_{k=(n-1)_+}^{n+1}\kappa_{k,m}^{n}I\ind{k,m}{-\nu,\mu},
\end{align}
where we denote $(t)_+=\max(t,0)$, with the symbols
\begin{align*}
    \kappa_{0,m}^{0} &= m-\frac{1}{2} \quad (m\ge1)  ,  
\end{align*}
while for  $n\ge1$, 
\begin{align*}
 \kappa_{k,m}^{n} &= 
  \begin{cases}
   -\frac{1}{2}(n+m-\frac{1}{2}), & \ \ \ k=n-1, \\
   \ n,& \ \ \ k=n,\\
    -\frac{1}{2}(n-m+\frac{1}{2}), & \ \ \ k=n+1,\\
  \end{cases}
\end{align*}
with $\kappa_{k,m}^{n}=0$ otherwise.\\

\begin{proof}[Proof of  Proposition \ref{prop:partialI0}]
  For simplicity of presentation, we only address the calculations for $(\partial/\partial x_0) I\ind{n,0}{+,+}$. The calculations for the remaining cases follow the same principle and are therefore omitted.

In view of the definition \eqref{eq:interiorharmonic}, direct computations show that
    \begin{align*} 
 \frac{\partial}{\partial x_{0}}I\ind{n,0}{+,+}&=\frac{\partial}{\partial x_{0}}\bigg((\cosh\eta-\cos\theta)^{1/2}Q_{n-\frac{1}{2}}^0(\cosh\eta)\cos(n\theta)\bigg)\nonumber  \\
  &=-  \sin\theta\sinh\eta\bigg(\frac{\sinh\eta}{2(\cosh\eta-\cos\theta)^{1/2}}\,Q_{n-\frac{1}{2}}^0(\cosh\eta)\cos(n\theta)\nonumber\\    
  & \qquad \qquad   +\sinh\eta \,(Q_{n-\frac{1}{2}}^{0})'(\cosh\eta)(\cosh\eta-\cos\theta)^{1/2}\cos(n\theta)\bigg)\nonumber\\
  &\quad+ (\cosh\eta\cos\theta-1)\bigg(\frac{\sin\theta}{2(\cosh\eta-\cos\theta)^{1/2}}\,Q_{n-\frac{1}{2}}^0(\cosh\eta)\cos(n\theta)\nonumber\\ & \qquad    \qquad     - \, n\,(\cosh\eta-\cos\theta)^{1/2}\,Q_{n-\frac{1}{2}}^0(\cosh\eta)\sin(n\theta)\bigg) \nonumber\\
  &= (\cosh\eta-\cos\theta)^{1/2}\bigg(-\frac{1}{2}\,Q_{n-\frac{1}{2}}^0(\cosh\eta)\cosh\eta\cos(n\theta)\sin\theta\nonumber\\ & \qquad  \qquad \qquad \qquad  \qquad -(\cosh\eta\cos\theta-1)\,Q_{n-\frac{1}{2}}^0(\cosh\eta)\sin(n\theta)\nonumber\\& \qquad  \qquad \qquad \qquad \qquad -\sinh^{2}\eta\,(Q_{n-\frac{1}{2}}^{0})'(\cosh\eta)\cos(n\theta)\sin\theta \bigg).
\end{align*}
Applying the recursion formula
\begin{align*}
(1-t^{2})(Q_{n+1}^{m})^{\prime}(t)=(n+m+1)Q_{n}^{m}(t)-(n+1)t Q_{n+1}^{m}(t),
\end{align*}
and some trigonometric identities, we obtain
\begin{align*} 
       (\cosh\eta-\cos\theta)^{1/2}\bigg(&-n\,\cosh\eta \,Q_{n-\frac{1}{2}}^0(\cosh\eta)\cos(n\theta)\sin\theta\nonumber\\&
       -n\,(\cosh\eta\cos\theta-1)\,Q_{n-\frac{1}{2}}^0(\cosh\eta)\sin(n\theta)\nonumber\\&
       +(n-\frac{1}{2})\,Q_{n-\frac{3}{2}}^0(\cosh\eta)\cos(n\theta)\sin\theta\bigg).
\end{align*}
Now use the recursion formula
\begin{align*}
    (n-m+1)Q_{n+1}^{m}(t)=(2n+1)t Q_{n}^{m}(t)-(n+m)Q_{n-1}^{m}(t),
\end{align*}
with $n-1/2$ in place of $n$ and with $m=0$ to convert the expression into
\begin{align*}
   (\cosh\eta-\cos\theta)^{1/2}\bigg(  & -\frac{2n+1}{4}\,Q_{n+\frac{1}{2}}^0(\cosh\eta)\sin (n\theta)+\,n\,Q_{n-\frac{1}{2}}^0(\cosh\eta)\sin(n\theta)\nonumber \\
        &  -\frac{2n-1}{4}\,Q_{n-\frac{3}{2}}^0(\cosh\eta)\sin (n-1)\theta\bigg).
\end{align*}
This establishes the statement.  
\end{proof}

The proof of Proposition \ref{prop:partialI12} is similar, although more
complicated, and will be omitted.

The foregoing results show that the toroidal harmonics do not form an
Appell system with respect to any of the three partial derivatives
$\partial/\partial x_i$. To obtain harmonics satisfying an Appell-type
property with respect to $\partial/\partial x_0$, we introduce a
change of basis for the toroidal harmonics as follows: Set
$i\ind{n,m}{*\,n}=1$, and having defined $i\ind{0,m}{*\,n-1},\dots,i\ind{n-1,m}{*\,n-1} $, then for $0\le k\le n-1$ let
\begin{align}
i\ind{k,m}{*\,n}=\frac{1}{\kappa_{n,m}^{n-1}}\sum_{j=(k-1)_+}^{n-1}\kappa_{k,m}^{j}i\ind{j,m}{*\,n-1}.
\end{align}
Further, set $i\ind{n,m}{n}=1$ and then recursively for $k=n-1,n-2,\dots,0$ let
\begin{align} \label{definitionitoistar}
  i\ind{k,m}{n}=\,-\sum _{j=k+1}^{n} i\ind{k,m}{*\,j}\, i\ind{j,m}{n}.
\end{align}
Finally, we define a new set of harmonic functions by 
\begin{align} \label{eq:Istar}
    I^{*\,\nu,\mu}_{n,m}=\sum_{k=0}^{n} i\ind{k,m}{*\,n}I\ind{k,m}{\nu,\mu}.    
\end{align}

From this construction, we have
\begin{proposition} \label{prop:appellI} The collection
  $\{I\ind{n,m}{*\,\nu,\mu}\}$ satisfies the following reverse
  Appell-type property:
\begin{align} \label{eq:reverseAppell}
\frac{\partial I\ind{n,m}{*\,\nu,\mu}}{\partial x_{0}} =\kappa_{n+1,m}^{n}I\ind{n+1,m}{*\,-\nu,\mu} \quad(n\ge0).
\end{align}
Also, we have the inverse relation
\begin{align} \label{eq:IfromIstar}
   I\ind{n,m}{\nu,\mu} = \sum _{k=0}^{n}i\ind{k,m}{n}I\ind{k,m}{*\,\nu,\mu}.
\end{align}
\end{proposition}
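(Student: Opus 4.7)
The plan is to prove both identities by direct computation from the recursive definitions given above; the formula \eqref{definitionitoistar} and the unlabeled recursion preceding it are essentially reverse-engineered to make the two claims hold, so the task is to unwind the recursions rather than to discover anything genuinely new.

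To establish the reverse Appell property \eqref{eq:reverseAppell}, I would first apply $\partial_0$ to \eqref{eq:Istar} and insert the expansion \eqref{eq:partialI}, obtaining
\[
\partial_0 I\ind{n,m}{*\,\nu,\mu} \;=\; \sum_{k=0}^{n} i\ind{k,m}{*\,n} \sum_{j=(k-1)_+}^{k+1} \kappa_{j,m}^{k}\, I\ind{j,m}{-\nu,\mu}.
\]
After swapping the order of summation, the coefficient of $I\ind{j,m}{-\nu,\mu}$ becomes $\sum_{k=(j-1)_+}^{n} \kappa_{j,m}^{k}\, i\ind{k,m}{*\,n}$, where extending the sum up to $k=n$ is harmless because $\kappa_{j,m}^{k}$ vanishes whenever $k\not\in\{j-1,j,j+1\}$. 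For $j=n+1$ only $k=n$ contributes, yielding the scalar $\kappa_{n+1,m}^{n}$; for $0\le j\le n$ the defining recursion for $i\ind{j,m}{*\,n+1}$ identifies this coefficient with $\kappa_{n+1,m}^{n}\, i\ind{j,m}{*\,n+1}$. Reassembling via \eqref{eq:Istar} reconstructs $\kappa_{n+1,m}^{n}\, I\ind{n+1,m}{*\,-\nu,\mu}$, which is \eqref{eq:reverseAppell}.

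To prove \eqref{eq:IfromIstar}, note that \eqref{eq:Istar} exhibits the triangular array $(i\ind{k,m}{*\,n})_{0\le k\le n}$ as lower triangular with unit diagonal, hence invertible. Substituting \eqref{eq:Istar} into the right-hand side of \eqref{eq:IfromIstar} and interchanging sums, the coefficient of $I\ind{j,m}{\nu,\mu}$ equals $\sum_{k=j}^{n} i\ind{k,m}{n}\, i\ind{j,m}{*\,k}$. For $j=n$ this reduces to $i\ind{n,m}{n}\, i\ind{n,m}{*\,n}=1$; for $j<n$ it reads $i\ind{j,m}{n}+\sum_{k=j+1}^{n} i\ind{k,m}{n}\, i\ind{j,m}{*\,k}$, which vanishes by \eqref{definitionitoistar}. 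Thus the two sides of \eqref{eq:IfromIstar} agree.

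The main obstacle is bookkeeping at the boundary cases, where the supports of $\kappa_{j,m}^{k}$ shrink: for $k=0$ and for small $m$ the recursion of Proposition \ref{prop:partialI0} has fewer than three terms. Adopting the convention that $\kappa_{j,m}^{k}$ vanishes outside the stated range (in particular $\kappa_{0,m}^{0}=0$) allows the base case $n=0$ of \eqref{eq:reverseAppell} to be absorbed into the general argument, avoiding a separate verification against each of the distinct subcases listed in Proposition \ref{prop:partialI0}.
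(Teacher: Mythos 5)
Your argument is correct and coincides with the paper's own proof of both identities: the reverse Appell property is obtained by differentiating \eqref{eq:Istar} term by term, interchanging the double sum, and recognizing the defining recursion for $i\ind{k,m}{*\,n+1}$ in the resulting coefficients, while \eqref{eq:IfromIstar} is the inversion of the unit-lower-triangular array $(i\ind{k,m}{*\,n})$, which you merely verify by explicit coefficient computation instead of invoking matrix inverses. Your handling of the boundary case is also the right one: Proposition \ref{prop:partialI0} forces $\kappa_{0,m}^{0}=0$ and $\kappa_{1,m}^{0}=m-\tfrac{1}{2}$ (so that the normalizing constant $\kappa_{1,m}^{0}$ in the recursion is nonzero), and the paper's displayed line ``$\kappa_{0,m}^{0}=m-\tfrac{1}{2}$'' is evidently a misprint for $\kappa_{1,m}^{0}$.
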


\begin{proof}
To prove \eqref{eq:reverseAppell}, first we note that
  \begin{align*}
    \frac{\partial I\ind{n,m}{*\, \nu,\mu}}{\partial x_{0}}&= \frac{\partial}{\partial x_{0}}\sum_{j=0}^{n}i^{*\,n}_{j,m} I\ind{j,m}{\nu,\mu} \\
                                                         &=\sum_{j=0}^{n}i^{*\,n}_{j,m}\sum_{k=(j-1)_+}^{j+1}\!\!\!\kappa_{k,m}^{j}I\ind{k,m}{-\nu,\mu} \\
                                                         &=\sum_{k=0}^{n+1}\bigg(\sum_{j=(k-1)_+}^{n}\!\!\!\kappa_{k,m}^{j}i\ind{j,m}{*\,n}\bigg)I\ind{k,m}{-\nu,\mu}\\ &
                                                                                                                                                                    =\sum_{k=0}^{n}\bigg(\sum_{j=(k-1)_+}^{n}\!\!\!\kappa_{k,m}^{j}i\ind{j,m}{*\,n}\bigg)I\ind{k,m}{-\nu,\mu} \ + \ \kappa_{n+1,m}^{n}i\ind{n+1,m}{*\,n+1}I\ind{n+1,m}{-\nu,\mu}.
  \end{align*}
  Since $\kappa_{n+1,m}^{n}i\ind{k,m}{*\,n+1}=\sum_{j=(k-1)_+}^{n}\kappa_{k,m}^{j}i\ind{j,m}{*\,n}$ and $i\ind{n+1,m}{*\,n+1}=1$ we have 
  \begin{align*}
    \frac{\partial I\ind{n,m}{*\,\nu,\mu}}{\partial x_{0}}
    =&\,\sum_{k=0}^{n}\kappa_{n+1,m}^{n}i\ind{k,m}{*\,n+1}I\ind{k,m}{-\nu,\mu}+\kappa_{n+1,m}^{n}I\ind{n+1,m}{-\nu,\mu}\\=&\kappa_{n+1,m}^{n}\sum_{k=0}^{n+1}i\ind{k,m}{*\,n+1}I\ind{k,m}{-\nu,\mu}\\
    =&\,\kappa_{n+1,m}^{n}I\ind{n+1,m}{*\, -\nu,\mu} ,
  \end{align*}
  which establishes \eqref{eq:reverseAppell}.
  
To verify \eqref{eq:IfromIstar}, simply note that by \eqref{eq:Istar} the vector $(I\ind{n,m}{*\,\nu,\mu})_n$ is the image of the vector $(I\ind{k,m}{\nu,\mu})_k$ under the triangular matrix $(i\ind{k,m}{*\,j})_{k,j}$, which by 
\eqref{definitionitoistar} is the inverse of the matrix $(i\ind{j,m}{n})_{j,n}$.
\end{proof}

We will use $\Spanbar$ to indicate the closure of the linear span in $L^2(\dom)$, or in $L^2$, according to context. It is well known that convergent series of analytic functions in $L^2$ also converge uniformly on compact subsets, and thus are analytic; subspaces of harmonic or monogenic functions are closed in $L^2$.

It follows from \eqref{eq:IfromIstar} that
$\Spanbar\{ I\ind{n,m}{\nu,\mu}\colon\ n\le n_0\}= \Spanbar\{
I\ind{n,m}{*\,\nu,\mu}\colon\ n\le n_0\}$ for every $n_0$, and that
the collection $\{I\ind{n,m}{*\,\nu,\mu}\}$ is also a complete set for
$\har(\dom)$. This permits us to prove the following preliminary step
in our understanding of toroidal harmonics.

 \begin{proposition} \label{prop:primitiveharmonic0}
   Let $f_0\in\har(\dom)$. Then there exists $h\in\har(\dom)$ such that
 $f_0=\partial_0h$ if and only if $f_0\in\Spanbar\{I\ind{n,m}{\nu,\mu}\colon\ n\ge1\}$. 
\end{proposition}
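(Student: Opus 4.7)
The plan is to exploit the reverse-Appell identity $\partial_0 I\ind{n,m}{*\,\nu,\mu} = \kappa_{n+1,m}^n I\ind{n+1,m}{*\,-\nu,\mu}$ from Proposition~\ref{prop:appellI}: the operator $\partial_0$ raises the subscript $n$ by one in the reverse-Appell basis.

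For the forward direction, I would expand $h\in\har(\dom)$ in the reverse-Appell basis, $h = \sum c_{n,m}^{\nu,\mu}\, I\ind{n,m}{*\,\nu,\mu}$, with $L^2$-convergence. Because $L^2$-convergent series of harmonic functions on $\dom$ also converge uniformly on compact subsets together with all partial derivatives (via Cauchy estimates for harmonic functions on enclosed balls), I may differentiate termwise to obtain the formal series $f_0 = \sum c_{n,m}^{\nu,\mu}\,\kappa_{n+1,m}^n\, I\ind{n+1,m}{*\,-\nu,\mu}$, every term of which has subscript $n+1\geq 1$. To upgrade to $L^2$-membership in $\Spanbar\{I\ind{k,m}{*\,\nu,\mu}:k\geq 1\}$, I test against the $L^2$-orthogonal complement of this closed subspace within $\har(\dom)$: for any $g$ in this complement, each partial sum satisfies $\langle\partial_0 h_N,g\rangle=0$ by construction, and passage to the limit (using standard arguments combining the uniform-on-compacts convergence with $L^2$ estimates) yields $\langle f_0,g\rangle=0$.

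For the reverse direction, I use the span-equality lemma below to rewrite $f_0 = \sum_{n\geq 1} a_{n,m}^{\nu,\mu}\, I\ind{n,m}{*\,\nu,\mu}$ in $L^2$, and define
\begin{align*}
h = \sum_{n\geq 0} \frac{a_{n+1,m}^{-\nu,\mu}}{\kappa_{n+1,m}^n}\, I\ind{n,m}{*\,\nu,\mu}.
\end{align*}
The reverse-Appell identity makes $\partial_0 h=f_0$ formally, while $L^2$-convergence of the $h$-series is controlled by the uniform lower bound $|\kappa_{n+1,m}^n|\geq 1/4$ for all $n,m\geq 0$, which follows directly from the explicit formulas preceding~\eqref{eq:partialI} (the relevant quantities $n-m+\tfrac{1}{2}$ are half-odd integers, and hence bounded away from zero).

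The main obstacle is the span-equality lemma $\Spanbar\{I\ind{n,m}{*\,\nu,\mu}:n\geq 1\} = \Spanbar\{I\ind{n,m}{\nu,\mu}:n\geq 1\}$, which is needed in both directions. Via the triangular relations~\eqref{eq:Istar} and~\eqref{eq:IfromIstar}, this reduces to showing that $I\ind{0,m}{+,\mu}$ lies in the $L^2$-closure of $\{I\ind{k,m}{+,\mu}:k\geq 1\}$ for each $m\geq 0$ and $\mu=\pm$ (the $\nu=-$ case is automatic since $I\ind{0,m}{-,\mu}\equiv 0$). I anticipate establishing this claim by combining two applications of the Fourier identity~\eqref{eq:cohldominici} at distinct values of $\alpha$ in order to produce an explicit $L^2$-convergent expansion of $I\ind{0,m}{+,\mu}$ in terms of $\{I\ind{k,m}{+,\mu}:k\geq 1\}$, or alternatively by a projection argument showing that no nonzero harmonic function can simultaneously lie in $\Spanbar\{I\ind{k,m}{+,\mu}:k\geq 0\}$ and be orthogonal to $\{I\ind{k,m}{+,\mu}:k\geq 1\}$.
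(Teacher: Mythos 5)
Your two main steps track the paper's own argument: expand in the reverse-Appell system $I\ind{n,m}{*\,\nu,\mu}$, apply \eqref{eq:reverseAppell} termwise, and in the converse direction shift indices down and divide by $\kappa_{n+1,m}^{n}$ (your bound $|\kappa_{n+1,m}^{n}|\ge 1/4$ is correct and is the relevant quantitative fact). The convergence issues you flag — termwise $\partial_0$ in the $L^2$ sense, and the fact that bounding coefficients does not control $L^2$ norms of a series over a non-orthogonal system under an index shift — are real, but the paper leaves them at exactly the same level of informality, so I will not press them.

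The genuine problem is your plan for what you correctly identify as the main obstacle. Your reduction of the span equality $\Spanbar\{I\ind{n,m}{*\,\nu,\mu}\colon n\ge1\}=\Spanbar\{I\ind{n,m}{\nu,\mu}\colon n\ge1\}$ to the claim $I\ind{0,m}{+,\mu}\in\Spanbar\{I\ind{k,m}{+,\mu}\colon k\ge1\}$ is accurate: for instance $I\ind{2,m}{*\,\nu,\mu}$ carries the $I\ind{0,m}{\nu,\mu}$-coefficient $\kappa_{0,m}^{1}/\kappa_{2,m}^{1}=(m+\tfrac12)/(\tfrac32-m)\neq 0$, so the inclusion $\Spanbar\{I^*\colon n\ge1\}\subseteq\Spanbar\{I\colon n\ge1\}$ forces $I\ind{0,m}{\nu,\mu}$ into the latter span. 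But the claim you then propose to prove cannot hold consistently with the rest of the paper: it would give $\Spanbar\{I\ind{n,m}{\nu,\mu}\colon n\ge1\}=\har(\dom)$, so the proposition would say that \emph{every} $f_0\in\har(\dom)$, in particular every $I\ind{0,m}{+,\mu}$, is $\partial_0 h$ for some $h\in\har(\dom)$ — exactly what the paper's architecture denies (it is the reason the $n=0$ monogenics $T\ind{0,m}{+,\mu}$ must be manufactured with the operator $\Psi$ rather than as $\partial$ of a harmonic, and it would destroy the uniqueness argument in Theorem \ref{theo:basisAvalued} and the ``unique representation'' invoked in Proposition \ref{prop:secondbasisA}). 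Both of your suggested routes to the sub-lemma — an expansion built from \eqref{eq:cohldominici}, or the projection argument, which is an equivalent restatement — are therefore dead ends. The paper itself sidesteps the issue by passing from the hypothesis $f_0\in\Spanbar\{I\ind{n,m}{\nu,\mu}\colon n\ge1\}$ directly to an expansion $f_0=\sum_{n\ge1}a\ind{n,m}{\nu,\mu}I\ind{n,m}{*\,\nu,\mu}$ without comment; what is actually needed is a minimality/biorthogonality statement identifying the vanishing of the $n=0$ coefficient in the two graded systems, and your own triangular computation shows these are \emph{a priori} different conditions. As it stands, your proposal will not close this gap; either the proposition must be read and proved with the starred system throughout, or a genuinely different argument for the span identification is required.
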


\begin{proof}
  By \eqref{eq:Istar}, we have $I\ind{0,m}{*\,\nu,\mu}=I\ind{0,m}{\nu,\mu}$. Supposing the condition,
  we may write $f_0 = \sum_{n\ge1}\sum_{m,\nu,\mu} a\ind{n,m}{\nu,\mu}I\ind{n,m}{*\,\nu,\mu}$. Then by Proposition \ref{prop:appellI},
  $f_0= \partial_0 h$ where
 \[h=\sum_{n\ge1}\sum_{m,\nu,\mu}  a\ind{n,m}{\nu,\mu}(1/\kappa_{n,m}^{n-1})
  I\ind{n-1,m}{*\,-\nu,\mu}. \]

  Now suppose that $f_0= \partial_0 h$. Write
  $h=\sum_{n\ge0} b\ind{n,m}{\nu,\mu}I\ind{n,m}{*\,\nu,\mu}$ and again apply Proposition
  \ref{prop:appellI} to see that $f_0\in\Spanbar\{I\ind{n,m}{\nu,\mu}\colon\ n\ge1\}$. This concludes the proof.
\end{proof}


\section{Basis for $\A$-valued monogenic functions} \label{Section_Basis_toroidal_monogenics1}

The reverse Appell-type property with respect to $\partial_0$ of
Proposition \ref{prop:appellI} motivates the following definition.

\begin{definition} \label{defi:monogbasics}
The \textit{exact basic toroidal monogenic functions} are
  \begin{align}  \label{eq:monogbasics}
    T\ind{n,m}{\nu,\mu} = \partial  I\ind{n-1,m}{*\,-\nu,\mu},\ n\ge 1 .
  \end{align}
\end{definition}

Clearly $T\ind{n,m}{\nu,\mu}\in\M(\A)$ and $\coh T\ind{n,m}{\nu,\mu}=0$. Since every harmonic function
is in the closure of the real-linear span of
$\{I\ind{n,m}{\nu,\mu}\}$, the following is trivial. (It is not difficult to verify that the interchange of summation and differentiation occurring in Proposition \ref{Proposition_span} below is permitted.)

\begin{proposition} \label{Proposition_span}
  Let $h\in\har(\dom)$. Then  $\partial h\in \Spanbar\{T\ind{n,m}{\nu,\mu}\colon\ n\ge1\}$.
\end{proposition}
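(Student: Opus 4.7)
The plan is to expand $h$ in the reverse-Appell basis of starred toroidal harmonics and apply the Fueter conjugate $\partial$ term by term, using the identity $\partial I\ind{n,m}{*\,\nu,\mu}=T\ind{n+1,m}{-\nu,\mu}$ built into Definition \ref{defi:monogbasics}.

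Concretely, since $\{I\ind{n,m}{*\,\nu,\mu}\}$ is a complete system in $\har(\dom)$---a fact established immediately before Proposition \ref{prop:primitiveharmonic0} from the triangular relation \eqref{eq:IfromIstar} together with the classical completeness of the basic toroidal harmonics---one may write
\begin{align*}
h \;=\; \sum_{n\ge 0}\,\sum_{m,\nu,\mu} b\ind{n,m}{\nu,\mu}\, I\ind{n,m}{*\,\nu,\mu}
\end{align*}
with $L^2(\dom)$ convergence. Each partial sum $h_N$ is a finite linear combination, and applying $\partial$ yields $\partial h_N = \sum b\ind{n,m}{\nu,\mu}\, T\ind{n+1,m}{-\nu,\mu}$, a finite $\mathbb{R}$-linear combination of elements of $\{T\ind{n,m}{\nu,\mu}:n\ge 1\}$. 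Thus $\partial h_N \in \Span\{T\ind{n,m}{\nu,\mu}:n\ge 1\}$ for every $N$, and it remains only to pass to the limit.

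The remaining and principal step is to verify that $\partial h_N\to\partial h$ in a sense strong enough to place $\partial h$ in the $L^2$-closure $\Spanbar\{T\ind{n,m}{\nu,\mu}:n\ge 1\}$. Since $h_N\to h$ in $L^2(\dom)$ and both are harmonic, the standard interior estimate $|\partial^\alpha u(x)|\le C_K\,\|u\|_{L^2(\dom)}$ for $x$ in a compact set $K\subset\dom$ immediately delivers uniform convergence $\partial h_N\to\partial h$ on compact subsets, which already identifies $\partial h$ pointwise with the formal series. The main obstacle is upgrading this compact-open convergence to genuine $L^2(\dom)$-convergence. I expect this to reduce to an $L^2$-continuity estimate of the form $\|\partial h_N-\partial h_M\|_{L^2(\dom)}\le C\,\|h_N-h_M\|_{L^2(\dom)}$, which in turn rests on the graded structure of the basis $\{I\ind{n,m}{*\,\nu,\mu}\}$ (the reverse-Appell relation \eqref{eq:reverseAppell} couples each index $(n,m)$ to only finitely many others) combined with a bound on the Gram matrix of $\{I\ind{n,m}{*\,\nu,\mu}\}$ restricted to each fixed $m$. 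This is precisely the technical content of the parenthetical remark before the statement that the interchange of summation and differentiation is permitted, and completing this $L^2$-control is where the work sits.
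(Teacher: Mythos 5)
Your argument is the same as the paper's: expand $h$ in the complete system $\{I\ind{n,m}{*\,\nu,\mu}\}$ and apply $\partial$ termwise via Definition \ref{defi:monogbasics}, so that each partial sum lands in $\Span\{T\ind{n,m}{\nu,\mu}\colon n\ge1\}$. The paper declares the result trivial and disposes of the interchange of $\partial$ with the infinite sum in a parenthetical remark, so the step you leave open is exactly the step the paper leaves unproved; you have correctly isolated the only non-formal content.

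However, the mechanism you propose for closing that step cannot work. An estimate $\|\partial h_N-\partial h_M\|_{L^2(\dom)}\le C\,\|h_N-h_M\|_{L^2(\dom)}$ with a constant independent of $N,M$ would make $h\mapsto\partial h$ a bounded operator on the Bergman-type space $\har(\dom)$, and differentiation is never $L^2$-bounded up to the boundary on such spaces (already on the unit disk $f\mapsto f'$ is unbounded on square-integrable holomorphic functions; here the linear growth in $n$ of $\kappa_{n+1,m}^{n}$ in \eqref{eq:reverseAppell} plays the role of the factor $n$). Relatedly, the conclusion silently presupposes $\partial h\in L^2(\dom)$, which fails for some $h\in\har(\dom)$, so no argument valid for \emph{every} $h$ can succeed; the statement must be read with that integrability as a standing hypothesis. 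Under that hypothesis the cleanest repair is to observe that you only need \emph{some} sequence of finite combinations of $\{T\ind{n,m}{\nu,\mu}\colon n\ge1\}$ converging to $\partial h$ in $L^2$: your interior estimates already give $\partial h_N\to\partial h$ uniformly on compact subsets, so it suffices to show $\sup_N\|\partial h_N\|_{L^2(\dom)}<\infty$, whence a subsequence converges weakly in $L^2$ to $\partial h$ and the weak and norm closures of the subspace $\Span\{T\ind{n,m}{\nu,\mu}\colon n\ge1\}$ coincide. That uniform bound is the genuine quantitative content; it requires decay estimates for the coefficients $b\ind{n,m}{\nu,\mu}$ forced by $\|h\|_{L^2}<\infty$ together with norm estimates for the $I\ind{n,m}{*\,\nu,\mu}$, which neither your sketch nor the paper supplies.
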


Proposition \ref{prop:primitiveharmonic0} tells us immediately
that the harmonics $I\ind{n,m}{*\,\nu,\mu}$ and  $I\ind{n,m}{\nu,\mu}$ are scalar parts of monogenic functions when $n\ge1$.  The
verification for $n=0$ follows along different lines. 

\def\teo{\operatorname{\mathcal{T}\!}}
The \textit{Teodorescu operator} for a bounded domain $D\subset\C$
is given by
\[  \teo_Df(w) = \frac{-1}{\pi}\int_D \frac{f(z)}{z-w} \,dx\,dy.
\]
It satisfies  
 $(\partial \teo_Df(z))/(\partial\overline{z}) = f(z)$ (see \cite{AF2003,Ahl2006}).  We take $D$ to be the annulus with
inner and outer radii $\sinh\eta_0/(\cosh\eta_0\pm1)$, which
corresponds to the slice of $\dom$ at $x_0=0$.

Consider the following standard construction \cite{Sudbery1979} starting
with a real-valued function $f_0$ in $\dom$. Let
$w(z)=\teo_D((\partial_0f_0)(0,x,y))= w_1(z)+iw_2(z)$, $z = x + iy \in \C$, so
\[ \frac{1}{2}\big( \frac{\partial w_1}{\partial x}-
  \frac{\partial w_2}{\partial y} \big) = \partial_0f_0, \quad
  \frac{1}{2}\big( \frac{\partial w_2}{\partial x}+
  \frac{\partial w_1}{\partial y} \big) = 0.
\]
Define
\begin{align*}
\vec v(x_1,x_2) =    -\frac{1}{2}w_1(x_1+ix_2)e_1 +\frac{1}{2}w_2(x_1+ix_2)e_2,
\end{align*}
so $(\partial_1e_1+\partial_2e_2)\vec v = \partial_0f_0$.
Next, let
\[ f_1(x)e_1+ f_2(x)e_2 = - \int_0^{x_0}
  (\partial_1e_1+\partial_2e_2) f_0(t,x_1,x_2)\,dt -  \vec v(x_1,x_2) \]
which is well defined for $x\in\dom$.

This construction defines the operator
\begin{align}   \label{eq:defPsi}
  \Psi[f_0] = f_0+ f_1e_1+ f_2e_2 .
\end{align}
One may verify that when $f_0$ is harmonic, $\Psi[f_0]$ is
monogenic. We complement \eqref{eq:monogbasics} as follows.

\begin{definition} The
\textit{basic toroidal harmonics for index $n=0$} are
  \begin{align} \label{eq:T0m}
    T\ind{0,m}{+,\mu} = \Psi[  I\ind{0,m}{+,\mu} ] - (\coh \Psi[  I\ind{0,m}{+,\mu} ])W_{-1}^-.
  \end{align}
\end{definition}

From the fact that $I\ind{0,m}{+,\mu}$ and $\partial_i I\ind{0,m}{+,\mu}$ are bounded in $\overline{\Omega}_{\eta_0}$ (in fact, in a larger domain $\Omega_{\eta_0-\epsilon}$), it follows that $w_1, w_2$ as well as $\vec v$ in the above construction are also bounded. Since 
$W_{-1}^- \in L^2(\dom)$, we have $T\ind{0,m}{+,\mu}\in\M(\A)$. Further, by \eqref{eq:cohw-1}, $\coh T\ind{0,m}{+,\mu}=0$.
\begin{lemma} \label{lemm:Tindependent}
  The monogenic functions $T\ind{n,m}{\nu,\mu}$ defined in \eqref{eq:monogbasics}, \eqref{eq:T0m} are linearly independent over $\R$.
\end{lemma}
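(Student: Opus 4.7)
The natural strategy is to reduce the claim to the already-known linear independence of the reverse-Appell harmonics $\{I\ind{n,m}{*\,\nu,\mu}\}$ by passing to the scalar (real) part. Suppose a finite real linear combination
\[
\sum c\ind{n,m}{\nu,\mu}\, T\ind{n,m}{\nu,\mu} = 0
\]
holds on $\dom$, where the sum ranges over all admissible indices in both defining formulas \eqref{eq:monogbasics} and \eqref{eq:T0m}. I would take the scalar part $\Sc$ of the identity, noting that the $e_1,e_2$-components drop out entirely. In particular, the correction term $(\coh \Psi[I\ind{0,m}{+,\mu}])\,W_{-1}^{-}$ in \eqref{eq:T0m} contributes nothing to the scalar part because $W_{-1}^{-} = J_{-1}^{-}e_1 + J_{-1}^{+}e_2$ has vanishing scalar component.

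For the terms with $n\ge 1$, the definition \eqref{eq:monogbasics} together with the reverse Appell property \eqref{eq:reverseAppell} of Proposition \ref{prop:appellI} yields
\[
\Sc\bigl(T\ind{n,m}{\nu,\mu}\bigr)
= \partial_0 I\ind{n-1,m}{*\,-\nu,\mu}
= \kappa_{n,m}^{\,n-1}\, I\ind{n,m}{*\,\nu,\mu}.
\]
For $n=0$, the construction \eqref{eq:defPsi} of $\Psi$ gives $\Sc\bigl(T\ind{0,m}{+,\mu}\bigr) = I\ind{0,m}{+,\mu} = I\ind{0,m}{*\,+,\mu}$, where the last equality uses the initialization $i\ind{0,m}{*\,0}=1$ in the definition of the reverse-Appell basis. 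Taking scalar parts of the assumed vanishing combination therefore produces
\[
\sum_{m,\mu} c\ind{0,m}{+,\mu}\, I\ind{0,m}{*\,+,\mu}
\;+\; \sum_{n\ge 1}\sum_{m,\nu,\mu} c\ind{n,m}{\nu,\mu}\,\kappa_{n,m}^{\,n-1}\, I\ind{n,m}{*\,\nu,\mu} = 0.
\]

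All $I\ind{n,m}{*\,\nu,\mu}$ appearing above are distinct, and by Proposition \ref{prop:appellI} the change-of-basis matrix $(i\ind{k,m}{*\,j})$ is triangular with unit diagonal and hence invertible, so $\{I\ind{n,m}{*\,\nu,\mu}\}$ inherits linear independence from the classical system $\{I\ind{n,m}{\nu,\mu}\}$. Consequently each scalar coefficient must vanish, giving $c\ind{0,m}{+,\mu}=0$ and $c\ind{n,m}{\nu,\mu}\,\kappa_{n,m}^{\,n-1}=0$ for $n\ge 1$. To finish, I would verify that $\kappa_{n,m}^{\,n-1}\neq 0$ for every admissible pair: for $n=1$ this coefficient equals $\frac{2m-1}{2}$ (when $m\ge 1$) or $-\frac12$ (when $m=0$), and for $n\ge 2$ it equals $-\frac{2n-2m-1}{4}$, which is never zero for integers $n,m$. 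Hence every $c\ind{n,m}{\nu,\mu}$ vanishes. The only delicate point is simply bookkeeping the admissible index ranges (in particular, only $\nu=+$ occurs at $n=0$, and $\mu=-$ is excluded when $m=0$); once these are tracked, the scalar-part reduction makes the independence essentially immediate.
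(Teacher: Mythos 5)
Your proposal is correct and follows essentially the same route as the paper: pass to scalar parts, use the reverse Appell property to identify $\Sc T\ind{n,m}{\nu,\mu}$ with $\kappa_{n,m}^{n-1}I\ind{n,m}{*\,\nu,\mu}$ (and $\Sc T\ind{0,m}{+,\mu}$ with $I\ind{0,m}{+,\mu}$), and conclude from the linear independence of the reverse-Appell harmonics. Your explicit check that $\kappa_{n,m}^{n-1}\neq 0$ is a worthwhile detail that the paper leaves implicit.
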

 
\begin{proof}
This is equivalent to showing that the monogenic functions
  \begin{align*}
    T\ind{0,m}{+,\mu} \mbox{\, and \,}
    T\ind{n,m}{*\,\nu,\mu} = \sum_{k=1}^{n} i\ind{k,m}{*\,n}T\ind{k,m}{\nu,\mu}  \ (n\ge 1)
  \end{align*}
  are linearly independent, which follows from the fact that the
 collection of  scalar parts $I\ind{0,m}{+,\pm}$ and
  $(1/\kappa_{n,m}^{n-1})I\ind{n-1,m}{*\,\pm,\pm}$ ($n\ge1$) is
  linearly independent.
\end{proof}

We now have the necessary material in hand for giving a complete
independent set in $\M(\A)$. From \eqref{eq:JfromI}, \eqref{eq:Istar}, and Proposition \ref{prop:appellI}, we have the following relationships among linear
spans:
\begin{align*}
  \Spanbar\{J_m^\pm\} \subseteq \Spanbar\{I\ind{n,m}{\nu,\mu}\} =
   \Spanbar\{I\ind{n,m}{*\,\nu,\mu}\} = \har(\dom).
\end{align*}

\begin{theorem} \label{theo:basisAvalued}
  Every $f\in\M(\A)$ has a unique representation
\begin{align}  \label{eq:basisA}
  f =    \sum_{n,m,\nu,\mu} a\ind{n,m}{\nu,\mu} T\ind{n,m}{\nu,\mu}
  +\sum_{m=-\infty}^\infty b\ind{n,m}{\nu,\mu}\sum_{\mu=\pm1} W_m^\mu
\end{align}
with real coefficients $a\ind{n,m}{\nu,\mu}$,  $b\ind{n,m}{\nu,\mu}$.
\end{theorem}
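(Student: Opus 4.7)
The plan is to split any $f\in\M(\A)$ as $\tilde f+g$, where $\tilde f$ is a series in the basic toroidal monogenics $T\ind{n,m}{\nu,\mu}$ whose scalar part reproduces that of $f$, and $g$ is an $\A$-valued monogenic constant absorbing the cohomology and the ``vector-only'' degrees of freedom. First I would expand $f_{0}=\re f$ in the reverse-Appell basis, writing
\[
  f_{0}=\sum c\ind{n,m}{\nu,\mu}\,I\ind{n,m}{*\,\nu,\mu}
\]
in $L^{2}(\dom)$ (with vanishing summands omitted). By the reverse-Appell identity \eqref{eq:reverseAppell}, $\re T\ind{n,m}{\nu,\mu}=\kappa_{n,m}^{n-1}I\ind{n,m}{*\,\nu,\mu}$ for $n\ge1$, while $\re T\ind{0,m}{+,\mu}=I\ind{0,m}{+,\mu}=I\ind{0,m}{*\,+,\mu}$ by construction. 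This dictates
\[
  \tilde f \;=\; \sum_{m,\mu} c\ind{0,m}{+,\mu}\,T\ind{0,m}{+,\mu}\;+\;\sum_{n\ge1,\,m,\nu,\mu}\frac{c\ind{n,m}{\nu,\mu}}{\kappa_{n,m}^{n-1}}\,T\ind{n,m}{\nu,\mu},
\]
which, once convergence is verified, lies in $\M(\A)$ with $\re\tilde f=f_{0}$.

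Next, setting $g:=f-\tilde f\in\M(\A)$, one has $\re g\equiv 0$, so $g$ is an $\A$-valued monogenic constant by Proposition \ref{prop:monogconst} and the remark following it; Proposition \ref{prop:monogconstrep} then produces a unique expansion $g=\sum_{m}(b_{m}^{+}W_{m}^{+}+b_{m}^{-}W_{m}^{-})$, completing the representation \eqref{eq:basisA}. For uniqueness, I would suppose $\sum a\ind{n,m}{\nu,\mu}T\ind{n,m}{\nu,\mu}+\sum(b_{m}^{+}W_{m}^{+}+b_{m}^{-}W_{m}^{-})=0$ and take scalar parts; this yields a vanishing linear combination of the nonzero $I\ind{n,m}{*\,\nu,\mu}$, which are linearly independent via \eqref{eq:IfromIstar} and completeness of $\{I\ind{n,m}{\nu,\mu}\}$, forcing every $a=0$. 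The residual equation among monogenic constants then forces every $b=0$ by the uniqueness implicit in Proposition \ref{prop:monogconstrep}. Note that $\coh f$ is automatically absorbed into $b_{-1}^{-}$: by \eqref{eq:cohw-1} and the construction \eqref{eq:T0m}, every $T\ind{n,m}{\nu,\mu}$ and every other $W_{m}^{\pm}$ has vanishing $\coh$.

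The step I expect to demand the most care is verifying that the formal series defining $\tilde f$ converges in $L^{2}(\dom)$ to an element of $\M(\A)$, not merely at the level of scalar parts. The scalar side is immediate because the coefficients $1/\kappa_{n,m}^{n-1}$ are uniformly bounded in $(n,m)$, but the $e_{1},e_{2}$-components (produced by $\partial$ via Proposition \ref{prop:partialI12}, and for $n=0$ by the Teodorescu-based operator $\Psi$ of \eqref{eq:defPsi}) must assemble into an $L^{2}$-summable vector series. The cleanest route is to use closedness of $\M(\A)$ in $L^{2}(\dom)$ and show that the partial sums of $\tilde f$ form a Cauchy sequence, in the spirit of the remark preceding Proposition \ref{Proposition_span}; the resulting $L^{2}$-limit is then automatically monogenic and, by continuity of taking scalar parts, has $f_{0}$ as its real part.
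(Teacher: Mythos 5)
Your proposal is correct and follows essentially the same route as the paper: match the scalar part of $f$ term-by-term against $\re T\ind{n,m}{\nu,\mu}$ using the reverse-Appell identity (the paper packages this via Proposition \ref{prop:primitiveharmonic0} and the splitting $f_0=h_0+h_1$, with $\Psi[h_0]$ corrected by an element of $\Span\{T\ind{0,m}{+,\mu}\}$, but it is the same computation), then observe that the residual has vanishing scalar part, hence is a monogenic constant expandable in the $W_m^\pm$ by Proposition \ref{prop:monogconstrep}, and prove uniqueness by taking scalar parts. The $L^2$-convergence of the vector components that you flag as the delicate step is likewise left implicit in the paper (via the parenthetical remark before Proposition \ref{Proposition_span}), so no substantive gap separates the two arguments.
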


\begin{proof}
 Write $f=f_0+f_1e_1+f_2e_2$, and decompose
  $f_0=h_0+h_1$ with $h_0\in\Spanbar\{I\ind{0,m}{\nu,\mu}\}$ and $h_1\in\Spanbar\{I\ind{n,m}{\nu,\mu}\colon\ n\ge1\}$.
  By Proposition \ref{prop:primitiveharmonic0}, take $h$ so that
  $\partial_0h=h_1$. The function 
  \begin{align*}  
    \varphi= f - (\Psi[h_0] + \partial h)
  \end{align*}
  satisfies $\Sc \varphi=0$, so $\varphi\in\MC(\A)$. By the property
  characterizing $h_0$ we can take
  $g\in\Span(\{T\ind{0,m}{+,\mu}\})$ with $\Sc g=h_0$. Further,
  take $h$ such that $\partial_0h=h_1$. Now let
  \[  \psi = \Psi[h_0] - g, \]
  which is also in $\MC(\A)$. Thus we have a decomposition
  \[   f = (g+\partial h) + (\varphi+\psi), \]
  where the first term is in the span of the $T\ind{n,m}{\nu,\mu}$
  and the second in the span of the $W_m^\mu$.
  
  For the uniqueness, we may suppose that $f=0$ in  \eqref{eq:basisA}. By \eqref{eq:reverseAppell}, \eqref{eq:monogbasics}, 
  and \eqref{eq:T0m}, the scalar part of $f$ is
  \begin{align*}
    0 = \Sc f = \sum a\ind{n,m}{\nu,\mu} \Sc T\ind{n,m}{\nu,\mu} 
    = \sum \lambda_{n,m} \, a\ind{n,m}{\nu,\mu} I\ind{n,m}{*\,\nu,\mu},
  \end{align*}
where $\lambda_{0,m} = 1$ and $\lambda_{n,m} = \kappa_{n,m}^{n-1}$ for $n \geq 1$. Therefore $a\ind{n,m}{\nu,\mu}=0$ for all indices, leaving $\sum b\ind{n,m}{\nu,\mu}\sum_{\mu=\pm1} W_m^\mu =0$, so $b\ind{n,m}{\nu,\mu} = 0$. Therefore  \eqref{eq:basisA} is independent.  
\end{proof}

While it would be very interesting to find an orthogonal basis
in $\M(\A)$, such a result would be beyond the scope of this paper.


\section{Basis for $\H$-valued monogenic functions} \label{Section_Basis_toroidal_monogenics2}

We now consider functions in $\M(\H)$; the domain of all function spaces mentioned continues to be $\dom$. One way to construct such
functions in terms of elements of $\M(\A)$ is with the aid of the following two results.
 
\begin{lemma} \label{lemm:semiunique}
  Let $f,g\in\M(\A)$ be such that $f+ge_3=0$ identically. Then
  $f,g\in\MC(\A)$.
\end{lemma}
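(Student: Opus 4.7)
The plan is to unpack the hypothesis $f + g e_3 = 0$ componentwise and then feed the resulting constraints into the characterization of monogenic constants in Proposition \ref{prop:monogconst}. First I would write $f = f_0 + f_1 e_1 + f_2 e_2$ and $g = g_0 + g_1 e_1 + g_2 e_2$, and compute $g e_3$ using $e_1 e_3 = -e_2$ and $e_2 e_3 = e_1$ (both consequences of $e_1 e_2 e_3 = -1$). The identity then reads
\begin{align*}
f_0 + (f_1 + g_2) e_1 + (f_2 - g_1) e_2 + g_0 e_3 = 0,
\end{align*}
so comparing the four real components gives $f_0 = 0$, $g_0 = 0$, $f_1 = -g_2$, and $f_2 = g_1$. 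The crucial qualitative consequence is that the scalar parts of both $f$ and $g$ vanish identically.

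Next I would extract the differential content of $\partialbar g = 0$. Expanding $(\partial_0 + \partial_1 e_1 + \partial_2 e_2)(g_1 e_1 + g_2 e_2)$ and separating by basis elements, the $e_1$- and $e_2$-components collapse (since $g_0 = 0$) to $\partial_0 g_1 = 0$ and $\partial_0 g_2 = 0$, so $g_1$ and $g_2$ are independent of $x_0$. The scalar and $e_3$ components then give $\partial_1 g_1 + \partial_2 g_2 = 0$ and $\partial_1 g_2 - \partial_2 g_1 = 0$, which are precisely the Cauchy-Riemann equations for $g_1 - i g_2$ in the complex variable $x_1 + i x_2$. With $g_0 = 0 \in \R$ constant, this matches exactly the characterization of Proposition \ref{prop:monogconst}, yielding $g \in \MC(\A)$.

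The conclusion $f \in \MC(\A)$ then drops out by inspection. The scalar part $f_0 = 0$ is constant, $f_1 = -g_2$ and $f_2 = g_1$ are $x_0$-independent, and $f_1 - i f_2 = -g_2 - i g_1 = -i(g_1 - i g_2)$ is holomorphic in $x_1 + i x_2$ since $g_1 - i g_2$ is. A second application of Proposition \ref{prop:monogconst} delivers $f \in \MC(\A)$. There is no genuine analytic obstacle in this proof; the only place to be careful is the sign bookkeeping in the products $e_j e_3$, and the observation that applying $\partialbar f = 0$ rather than $\partialbar g = 0$ produces the same system on $g_1, g_2$, reflecting the tight coupling forced by the hypothesis.
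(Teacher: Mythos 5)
Your proof is correct and follows essentially the same route as the paper: expand $f+ge_3$ componentwise to conclude $f_0=g_0=0$, then invoke the fact (noted right after Proposition \ref{prop:monogconst}) that an $\A$-valued monogenic function with constant scalar part is a monogenic constant. The only difference is that you re-derive that last fact explicitly from $\partialbar g=0$ rather than citing it, which is harmless.
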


\begin{proof}  Since
\[ f_0 + (f_1+g_2)e_1 + (f_2-g_1)e_2 + g_0 e_3 = 0, \]
we have $f_0=g_0=0$, so $f$ and $g$ are monogenic constants.
\end{proof}

Note that when $\varphi\in\MC(\A)$, then also $\varphi e_3\in \MC(\A)$. In particular, we have
\[  W_m^\pm e_3 = (J_m^\pm e_1\mp J_m^\pm e_2)e_3 = \mp W_m^\mp. \]

\begin{lemma} \label{lemm:decomp}
  (i) Let $F\in\M(\H)$. Then there exist $f,g\in\M(\A)$ such that
  $F=f+g e_3$. (ii) If $f+ge_3=\tilde f+\tilde ge_3$ with
  $f,g,\tilde f,\tilde g\in\M(\A)$, then $f-\tilde f,g-\tilde g\in \MC(\A)$.
\end{lemma}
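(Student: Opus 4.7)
The plan is to prove (i) by first producing an $\A$-valued monogenic function with the same scalar part as $F$, then recovering $g$ from the difference by right multiplication by $-e_3$. Part (ii) will then follow in one line from Lemma \ref{lemm:semiunique}.

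For (i), write $F = F_0 + F_1 e_1 + F_2 e_2 + F_3 e_3$ with $F_0 = \Sc F \in \har(\dom)$. The first step is to exhibit an $f\in\M(\A)$ whose scalar part equals $F_0$. This ingredient is essentially already present in the proof of Theorem \ref{theo:basisAvalued}: decomposing $F_0 = h_0 + h_1$ with $h_0 \in \Spanbar\{I\ind{0,m}{\nu,\mu}\}$ and $h_1 \in \Spanbar\{I\ind{n,m}{\nu,\mu} : n\ge 1\}$, Proposition \ref{prop:primitiveharmonic0} produces a harmonic $h$ with $\partial_0 h = h_1$, so that $f := \Psi[h_0] + \partial h$ lies in $\M(\A)$ and satisfies $\Sc f = h_0 + \partial_0 h = F_0$.

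With $f$ in hand, define $g := -(F - f)\, e_3$. Since $\partialbar$ commutes with right multiplication by the constant $-e_3$ and both $F$ and $f$ are monogenic, $g$ is monogenic. It remains to verify $g\in\A$. By construction the scalar part of $F-f$ vanishes, so $F - f = a_1 e_1 + a_2 e_2 + a_3 e_3$ for suitable real functions $a_i$. Using $e_1 e_3 = -e_2$, $e_2 e_3 = e_1$, $e_3^2 = -1$, a direct computation gives
\[
  g \;=\; -(a_1 e_1 + a_2 e_2 + a_3 e_3)\,e_3 \;=\; a_3 - a_2 e_1 + a_1 e_2,
\]
which has no $e_3$ component, so $g\in\M(\A)$. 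Multiplying back yields $g e_3 = F - f$, hence $F = f + g e_3$.

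For (ii), put $\hat f := f - \tilde f$ and $\hat g := g - \tilde g$. By linearity both lie in $\M(\A)$, and by hypothesis $\hat f + \hat g e_3 = 0$. Lemma \ref{lemm:semiunique} immediately yields $\hat f, \hat g \in \MC(\A)$. The only nontrivial ingredient in the whole argument is the first step of (i) --- producing an $\A$-valued monogenic function with prescribed harmonic scalar part --- and this has already been established earlier in the paper; once it is available, the rest is purely algebraic manipulation with the three identities $e_1 e_3 = -e_2$, $e_2 e_3 = e_1$, $e_3^2 = -1$.
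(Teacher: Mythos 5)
Your proof is correct, and it differs from the paper's in a small but genuine way. The paper applies the operator $\Psi$ of \eqref{eq:defPsi} twice --- it sets $\tilde f=\Psi F_0$ and $g=\Psi F_3$, forms $\varphi=F-(\tilde f+ge_3)$, observes that the scalar and $e_3$ components of $\varphi$ vanish so that $\varphi\in\MC(\A)$, and absorbs $\varphi$ into $f=\tilde f+\varphi$. You instead construct only one $\A$-valued monogenic function $f$ with $\Sc f=F_0$ and then \emph{solve} for $g$ algebraically as $g=-(F-f)e_3$: since $F-f$ is left monogenic with vanishing scalar part, right multiplication by the constant $-e_3$ keeps it left monogenic and, by the identities $e_1e_3=-e_2$, $e_2e_3=e_1$, $e_3^2=-1$, kills the $e_3$ component, so $g\in\M(\A)$ and $ge_3=F-f$. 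This buys you a second half that is purely algebraic and avoids a second appeal to the analytic operator $\Psi$ (and the attendant question of whether $\Psi F_3$ lands in $L^2$); the paper's version buys a more symmetric treatment of $F_0$ and $F_3$ and makes the monogenic-constant ambiguity of part (ii) visible already inside part (i). Two minor remarks: for the first step you could simply take $f=\Psi[F_0]$, whose scalar part is $F_0$ by construction, rather than routing through the decomposition $F_0=h_0+h_1$ and Proposition \ref{prop:primitiveharmonic0}; and both your argument and the paper's leave the $L^2$ membership of the constructed pieces at the same informal level, so nothing is lost there. Part (ii) is handled identically in both proofs via Lemma \ref{lemm:semiunique}.
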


\begin{proof}
  (i) Using the operator $\Psi$ of \eqref{eq:defPsi}, construct
   $\tilde f=\Psi F_0,\ g=\Psi F_3$ in $\M(\A)$. Let
  $\varphi = F - (\tilde f + ge_3)$. Then
  \[ \varphi = (F_0-\tilde f_0) + (F_1-\tilde f_1-g_2)e_1 +
    (F_2-\tilde f_2 +g_1)e_2 + (F_3 -g_0)e_3. \] Since $\tilde f_0=F_0$,
  $g_0=F_3$, we have $\varphi\in\MC(\A)$, and $F=f+ge_3$ where
  $f=\tilde f + \varphi$.

  (ii) Since $(f-\tilde f) +(g-\tilde g)e_3=0\in\MC(\A)$, it follows
  from Lemma \ref{lemm:semiunique} that
  $f-\tilde f,g-\tilde g\in \MC(\A)$.
\end{proof}

We would like to find a canonical choice of $f$ in the decomposition
$F=f+ge_3$ of Lemma \ref{lemm:decomp}. Something along these lines was
carried out in \cite{Cac2004} to construct a basis for $\H$-valued
monogenic homogeneous polynomials in a ball, combining $\A$-valued
polynomials with other $\A$-valued polynomials multiplied by
$e_3$. A different application of this idea appears in \cite{Morais2013}. To do something similar for the torus, we need a basis for $\M(\A)$ having a subbasis
of monogenic constants.

We will need the following fact. The  representations of the harmonic functions $1,x_0$ in terms of toroidal harmonics are
\begin{align}  
  1 =\frac{\sqrt{2}}{\pi}  \sum_{n=0}^\infty (1+\delta_{0,n}) I\ind{n,0}{+,+},  \quad
  x_0 = \frac{4\sqrt{2}}{\pi} \sum_{n=1}^\infty
 n\, I\ind{n,0}{-,+}  .\label{eq:1x0fromI} 
\end{align}
These formulas are a particular case of \cite[Eq.\ (55)]{Majic}. One may derive the representation of the constant 1 from \eqref{eq:cohldominici} almost trivially, while it is more challenging to obtain the formula for $x_0$ in this way. By \eqref{eq:IfromIstar}, we
recast \eqref{eq:1x0fromI} in terms of the $I\ind{n,m}{*\,\nu,\mu}$ as
\begin{align}  \label{eq:1x0fromI*}
  1 =  \sum _{k=0}^{\infty}  \alpha_k I\ind{k,0}{*\,-,+}, \quad
  x_0 =  \sum _{k=0}^{\infty} \beta_k I\ind{k,0}{*\,-,+} ,
\end{align}
where
\begin{align*}
  \alpha_k =    \frac{\sqrt{2}}{\pi}
    \sum_{n=k}^\infty (1+\delta_{0,n})i\ind{k,0}{n} ,\quad
  \beta_k =  \frac{4\sqrt{2}}{\pi}\!\!\!
  \sum_{n=(k-1)_+}^\infty \!\!\! n\,i\ind{k,0}{n} .
\end{align*}
We can use the  scalar harmonic function $x_0$
to obtain an expression for $1\in\M(\A)$:
\begin{align} \label{eq:1fromTstar}
  1 =  \partial_0x_0 =  \partial x_0 =
 \partial  \sum_{n=0}^\infty s_n      I\ind{n,0}{*\,-,+}  =
   \sum_{n=0}^\infty s_n T\ind{n+1,0}{+,+}.
\end{align}
 
\begin{proposition} \label{prop:secondbasisA}
  The collection
\begin{align} \label{eq:secondbasisA}
    (\{T\ind{n,m}{\nu,\mu}\}_{n\ge0}\setminus \{T\ind{1,0}{+,+}\}) \ \cup\  \{1\}\ \cup\ \{W_m^\pm\}_{-\infty}^\infty 
\end{align}
is a basis for $\M(\A)$ over $\R$.
\end{proposition}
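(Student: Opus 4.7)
The plan is to deduce the statement from Theorem \ref{theo:basisAvalued} by performing a single exchange of basis elements: trade $T\ind{1,0}{+,+}$ for the constant function $1$, using the identity $1 = \sum_{n=0}^\infty s_n T\ind{n+1,0}{+,+}$ of \eqref{eq:1fromTstar} as the exchange relation. The whole argument pivots on verifying that the leading coefficient $s_0$ is nonzero; once that is in hand, the rest is a routine ``change of basis.''

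Granting $s_0 \neq 0$, spanning follows by taking an arbitrary $f \in \M(\A)$, expanding it uniquely in the basis of Theorem \ref{theo:basisAvalued}, solving \eqref{eq:1fromTstar} for $T\ind{1,0}{+,+} = \frac{1}{s_0}\bigl(1 - \sum_{n \geq 1} s_n T\ind{n+1,0}{+,+}\bigr)$, substituting this into the expansion, and regrouping the two convergent series into one in the new collection \eqref{eq:secondbasisA}. For uniqueness, I would start from a vanishing combination $c \cdot 1 + \sum_{(n,m,\nu,\mu) \neq (1,0,+,+)} a\ind{n,m}{\nu,\mu} T\ind{n,m}{\nu,\mu} + \sum b_m^\mu W_m^\mu = 0$ in the new basis, replace $1$ by its series from \eqref{eq:1fromTstar} to obtain a vanishing combination in the original basis, and invoke Theorem \ref{theo:basisAvalued}: matching the coefficient of $T\ind{1,0}{+,+}$ gives $cs_0 = 0$, whence $c = 0$, and then all the $a$'s and $b$'s vanish by the same theorem.

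The main obstacle is confirming $s_0 \neq 0$. I would address this by taking scalar parts of \eqref{eq:1fromTstar} and using Proposition \ref{prop:appellI} to obtain $1 = \sum_{n \geq 0} s_n \kappa_{n+1,0}^n I\ind{n+1,0}{*\,+,+}$; comparing this with the scalar expansion of $1$ in the $I^*$-system (derived from \eqref{eq:1x0fromI} via the triangular invertible change of basis \eqref{eq:Istar}--\eqref{eq:IfromIstar}) and applying linear independence of $\{I\ind{n,m}{*\,\nu,\mu}\}$ in $\har(\dom)$ yields the clean relation $s_0 = \alpha_1/\kappa_{1,0}^0$, where $\alpha_1$ denotes the coefficient of $I\ind{1,0}{*\,+,+}$ in the $I^*$-expansion of $1$. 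Since $\kappa_{1,0}^0 = -\tfrac{1}{2} \neq 0$ by the definitions following Proposition \ref{prop:partialI0}, the task reduces to $\alpha_1 \neq 0$, which can be verified by direct computation from the recursions defining the coefficients $i\ind{k,m}{*\,n}$ and $i\ind{k,m}{n}$ together with the explicit formula for $\alpha_1$ derived from \eqref{eq:1x0fromI*}.
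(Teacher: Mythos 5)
Your proposal follows essentially the same route as the paper: spanning is obtained by solving \eqref{eq:1fromTstar} for $T\ind{1,0}{+,+}$ in terms of $1$ and the remaining $T\ind{n,0}{+,+}$, and independence by observing that a nonzero coefficient of $1$ would force a representation of the constant function omitting the index $(1,0,+,+)$, contradicting \eqref{eq:1x0fromI*} (the paper runs this argument on scalar parts, you run it directly on the $T$'s via Theorem \ref{theo:basisAvalued}; these are the same argument). The one point where you go beyond the paper is in isolating $s_0\neq 0$ as the crux --- a fair observation, since the paper divides by $s_0$ without comment --- but your claim that $\alpha_1\neq 0$ follows ``by direct computation'' is optimistic: $\alpha_1=\tfrac{\sqrt2}{\pi}\sum_{n\ge1} i\ind{1,0}{n}$ is an infinite series, so its nonvanishing is not a finite check and would need a separate argument.
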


As always, in this notation all admissible combinations of signs
$(\nu,\mu)$ are intended.

\begin{proof}
We solve   \eqref{eq:1fromTstar} to find
\begin{align*}
  T\ind{1,0}{+,+} = \frac{1}{s_0}( 1 - \sum_{n\ge2}s_{n-1}T\ind{n,0}{+,+} ) .
\end{align*}
Thus the closed span of \eqref{eq:secondbasisA} includes all
of $\M(\A)$ by Theorem  \ref{theo:basisAvalued}. The independence of \eqref{eq:secondbasisA} can
be seen as follows. Suppose that
\begin{align*}
   \sum_{(n,m,\nu,\mu)\not=(1,0,+,+)} \!\!\!\!\!\!\!\!\!\! a\ind{n,m}{\nu,\mu}T\ind{n,m}{\nu,\mu} \ \ \  + \ \ b_0 \ \  + \sum_{m=-\infty}^\infty c_m^\mu W_m^\mu \ \  \ =\ 0.
\end{align*}
The scalar part is 
\begin{align} \label{eq:scalarpart0}
  \sum_{m,\nu,\mu}a\ind{0,m}{\nu,\mu}I\ind{0,m}{\nu,\mu} \ \ \ + \!\!\!\!\!\!\!
  \sum_{\substack{n\ge 1\\(n,m,\nu,\mu)\not=(1,0,+,+)}} \!\!\!\!\!\!\!\!\!\!\! a\ind{n,m}{\nu,\mu}\kappa_{n,m}^{n-1}I\ind{n,m}{*\,\nu,\mu} \ \ \  + \ \ b_0 \ = \ 0 .
\end{align}
If $b_0\not=0$, then we would have
\begin{align*}
  1 = -\frac{1}{b_0} \bigg( \sum_{m,\nu,\mu}a\ind{0,m}{\nu,\mu}I\ind{0,m}{\nu,\mu} \ \ \ + \!\!\!\!\!\!\!\!\!\!\!
  \sum_{\substack{n\ge 1\\(n,m,\nu,\mu)\not=(1,0,+,+)}} \!\!\!\!\!\!\!\!\!\!\! a\ind{n,m}{\nu,\mu}\kappa_{n,m}^{n-1} I\ind{n,m}{*\,\nu,\mu} \bigg).
\end{align*}
This is a series of toroidal harmonics which does not include $I\ind{1,0}{*\,+,+}$, contradicting the unique representation \eqref{eq:1x0fromI*}. Therefore $b_0=0$.
Now \eqref{eq:scalarpart0} is reduced to
\begin{align*}
   \sum_{(n,m,\nu,\mu)\not=(1,0,+,+)} \!\!\!\!\!\!\!\!\!\! a\ind{n,m}{\nu,\mu}T\ind{n,m}{\nu,\mu} \ \ \  + \ \ \sum_{m=-\infty}^\infty c_m^\mu W_m^\mu \ \  \ =\ 0 ,
\end{align*}
and by the independence part of Theorem \ref{theo:basisAvalued}, we conclude that $a\ind{n,m}{\nu,\mu}T\ind{n,m}{\nu,\mu}=0$ and $c_m^\mu=0$.
Therefore the proposed basis is linearly independent as claimed.  
\end{proof}

Now we give our main result.

\begin{theorem}
  The set
  \begin{align} \label{eq:basisH}
    \{T\ind{n,m}{\nu,\mu}\}^{\prime}\, \cup\, \{1\}
    \, \cup\, \{W_m^\mu\}_{-\infty}^\infty   \, \cup\,
    \{T\ind{n,m}{\nu,\mu}e_3\}^{\prime} \, \cup\, \{1e_3\}
  \end{align}
  is a basis for the Hilbert space $\M(\H)$ over $\R$, where $\{\,\}^{\prime}$ excludes the index $(n,m,\nu,\mu) = (1,0,+,+)$.
\end{theorem}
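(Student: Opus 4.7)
The plan is to reduce the statement to Proposition \ref{prop:secondbasisA}, which already gives a basis for $\M(\A)$, by exploiting the decomposition $F = f + g e_3$ of Lemma \ref{lemm:decomp}(i) and handling the resulting ambiguities with Lemma \ref{lemm:semiunique} and Proposition \ref{prop:monogconstrep}. The identity $W_m^\pm e_3 = \mp W_m^\mp$ recorded just before Lemma \ref{lemm:decomp} is what explains the absence of $W_m^\mu e_3$ terms in \eqref{eq:basisH}: it makes them redundant in the span. The argument naturally splits into density and $\R$-linear independence.

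For density, I take $F\in\M(\H)$ arbitrary and apply Lemma \ref{lemm:decomp}(i) to write $F = f + g e_3$ with $f,g \in \M(\A)$. Expanding both $f$ and $g$ in the basis of Proposition \ref{prop:secondbasisA} gives $L^2(\dom)$-convergent series of the form
\begin{align*}
  f &= \sum a\ind{n,m}{\nu,\mu}\, T\ind{n,m}{\nu,\mu} + b_0 + \sum c_m^\mu\, W_m^\mu,\\
  g &= \sum \tilde a\ind{n,m}{\nu,\mu}\, T\ind{n,m}{\nu,\mu} + \tilde b_0 + \sum \tilde c_m^\mu\, W_m^\mu,
\end{align*}
with the $T$-sums omitting $(1,0,+,+)$. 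Multiplying the second line by $e_3$ on the right and using $W_m^\pm e_3 = \mp W_m^\mp$, the $W$-contribution of $g e_3$ is absorbed into a modified coefficient of each $W_m^\mu$. Collecting terms yields the desired representation of $F$ in the span of \eqref{eq:basisH}.

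For linear independence, suppose an $L^2(\dom)$-convergent linear combination of the elements of \eqref{eq:basisH} vanishes. Separating the $\A$-part from the $e_3$-coefficient, this becomes $f + g e_3 = 0$ with
\begin{align*}
  f = \sum a\ind{n,m}{\nu,\mu}\, T\ind{n,m}{\nu,\mu} + b_0 + \sum c_m^\mu\, W_m^\mu,\qquad
  g = \sum \tilde a\ind{n,m}{\nu,\mu}\, T\ind{n,m}{\nu,\mu} + \tilde b_0,
\end{align*}
where $g$ has no $W$-terms precisely because \eqref{eq:basisH} contains no $W_m^\mu e_3$. Lemma \ref{lemm:semiunique} forces $f,g\in\MC(\A)$. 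By Proposition \ref{prop:monogconst} and Proposition \ref{prop:monogconstrep}, every element of $\MC(\A)$ has a representation of the shape (scalar constant)\,$\cdot 1 + \sum c_m^\mu W_m^\mu$; applying this to $g$ and invoking the uniqueness half of Proposition \ref{prop:secondbasisA}, all $\tilde a\ind{n,m}{\nu,\mu}$ must vanish and $g = \tilde b_0\cdot 1$ is a scalar constant. The equation $f + \tilde b_0 e_3 = 0$ then gives $\tilde b_0 = 0$ on reading off the $e_3$-component, hence $g=0$ and $f=0$. A final appeal to the independence statement of Proposition \ref{prop:secondbasisA} kills the remaining coefficients $a\ind{n,m}{\nu,\mu}$, $b_0$ and $c_m^\mu$.

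The step requiring the most care is this independence argument, specifically the claim that a monogenic constant cannot have a non-trivial expansion in the $T\ind{n,m}{\nu,\mu}$ alone. That is exactly why Proposition \ref{prop:secondbasisA} was constructed to include $1$ as a separate basis element and why Proposition \ref{prop:monogconstrep} is invoked: together they guarantee that the $T$-family and the space $\MC(\A)$ are transversal inside $\M(\A)$, so that the $e_3$-multiplication used to build \eqref{eq:basisH} cannot introduce hidden dependencies beyond the already accounted-for identity $W_m^\pm e_3 = \mp W_m^\mp$.
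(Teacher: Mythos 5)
Your proposal is correct and follows essentially the same route as the paper: density via the decomposition $F=f+ge_3$ of Lemma \ref{lemm:decomp}(i) together with Proposition \ref{prop:secondbasisA} and the identity $W_m^\pm e_3=\mp W_m^\mp$, and independence via Lemma \ref{lemm:semiunique} plus the fact that $\MC(\A)$ is the closed span of $\{1\}\cup\{W_m^\mu\}$ (Proposition \ref{prop:monogconstrep}) combined with the independence half of Proposition \ref{prop:secondbasisA}. The only difference is cosmetic ordering (you dispose of $g$ before $f$, the paper does the reverse), so no further comparison is needed.
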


\begin{proof}
  Let us write
  \begin{align*}
    E_{\rm T}' = \Span(\{T\ind{n,m}{\nu,\mu}\}') , \quad E_{\rm W} = \Span \{W_m^\mu\}_{-\infty}^\infty  .
  \end{align*}
  First we show that \eqref{eq:basisH} generates $\M(\H)$.
  Let $F\in\M(\H)$. By Lemma \ref{lemm:decomp}, we can write
  $F = f + ge_3$ for some $f,g\in\M(\A)$. By Proposition \ref{prop:secondbasisA}, $f,g\in E_{\rm T}'+\R+E_{\rm W}$.
Thus $ge_3\in E_{\rm T}'e_3+\R e_3+E_{\rm W}e_3= E_{\rm T}'e_3+\R e_3+E_{\rm W}$ since $W_m^\pm \,e_3 = \mp W_m^\mp$. It then follows that
  \begin{align*}
    F \in  E_{\rm T}'+\R+E_{\rm W}+  E_{\rm T}'e_3 + \R e_3
  \end{align*}
  as claimed.
  
  It only remains to show that \eqref{eq:basisH} is linearly independent. Suppose that
\begin{align*}
  {\sum}' a\ind{n,m}{\nu,\mu} T\ind{n,m}{\nu,\mu} + \ b_0 \ + \  \sum_0^\infty c_m^\mu W_m^\mu\ +
 {\sum}' \hat a\ind{n,m}{\nu,\mu} T\ind{n,m}{\nu,\mu}e_3 \ + \ \hat b_0\,e_3 \
   =0
\end{align*}
with real coefficients, where ${\sum}'$ excludes the index $(n,m,\nu,\mu) = (1,0,+,+)$. Consider the following elements of $\M(\A)$:
\begin{align*}
  f &=   {\sum}' a\ind{n,m}{\nu,\mu} T\ind{n,m}{\nu,\mu} \ + \ b_0 \ + \  \sum_0^\infty  c_m^\mu W_m^\mu, \\
  g &=  {\sum}' \hat a\ind{n,m}{\nu,\mu} T\ind{n,m}{\nu,\mu} \  + \ \hat b_0 .
\end{align*}
Since  $f + ge_3=0$,  by Lemma \ref{lemm:semiunique} we have $f,g\in\MC(\A)$. 
Since $b_0 + \sum c_m^\mu W_m^\mu\in\MC(\A) $, necessarily also
\begin{align*}
  {\sum}' a\ind{n,m}{\nu,\mu} T\ind{n,m}{\nu,\mu}\in\MC(\A) = \Span(\{1\}\cup\{W_m^\pm\}) .
\end{align*}
By Proposition
\ref{prop:secondbasisA}, $\{1\}\cup\{W_m^\pm\}$
is linearly independent of  $\{T\ind{n,m}{\nu,\mu}\}^{\prime}$, so $a\ind{n,m}{\nu,\mu}=0$.  Since $g$ is a monogenic constant, $\hat a\ind{n,m}{\nu,\mu}=0$. Consequently, 
\begin{align*}
  b_0 + \  \sum_0^\infty c_m^\mu W_m^\mu +\hat b_0e_3 =0 .
\end{align*}
Since the scalar and $e_3$ components of this expression vanish, $b_0=\hat b_0=0$. Then
$\sum_0^\infty c_m^\mu W_m^\mu=0$, and finally by linear independence of the
$W_m^\mu$, $c_m^\mu=0$.  This concludes the proof of the linear
independence, so \eqref{eq:basisH} is indeed a basis for $\M(\H)$.
\end{proof}


\section{Discussion}

On a ball, the construction of a convenient basis for the space of monogenic functions is relatively straightforward, and it is known \cite{MoraisHabilitation2021} that for interior prolate and oblate spheroids, it can be carried out in the same way, the only essential difference being that the polynomial basis elements are no longer homogeneous. We have found that an analogous construction for the torus is much more complex. The Appell property for the basic Fueter operator $\partial$ goes in the opposite direction, with the consequence that the basic monogenics of index $n=0$ must be constructed in a completely different way from the simple application of $\partial$ to scalar harmonics.  The non-trivial topology of the torus adds an additional complication.

We have identified the $\A$-valued monogenic constants in this space, constructed a basis for the $\A$-valued monogenics, and then a basis for the $\H$-valued monogenic functions on the torus. Because the bases are comprised of three essentially different types of functions, the task of orthogonalizing them appears to be a project that will depend on using new ideas.


\bigskip
\textbf{\large Acknowledgments}

Z. Ashtab was supported by CONACyT (now CONAHCyT), Mexico. J. Morais' work was partially supported by the Asociación Mexicana de Cultura, A.\ C.


\newcommand{\authorlist}[1]{#1}
\newcommand{\booktitle}[1]{\textit{#1}}
\newcommand{\articletitle}[1]{#1}
\newcommand{\journaltitle}[1]{\textit{#1}}
\newcommand{\volnum}[1]{\textbf{#1}}

\end{document}